\documentclass[11pt]{article}
\PassOptionsToPackage{obeyspaces}{url}
\usepackage{amsfonts, amsmath, amssymb}

\usepackage[hidelinks]{hyperref}
\usepackage{bookmark}
\bookmarksetup{open,numbered,depth=5} 

\usepackage{amsthm}

\usepackage{breakurl}
\usepackage{tikz}

\usepackage{etoolbox} 
\patchcmd{\thebibliography}{\leftmargin\labelwidth}{\leftmargin\labelwidth\addtolength\itemsep{-0.1\baselineskip}}{}{}

\usepackage{mathtools}
\usepackage{enumitem}

\newcommand{\ga}[0]{\alpha }

\newcommand{\gD}[0]{\Delta }

\newcommand{\eps}[0]{\varepsilon }

\newcommand{\vp}[0]{\varphi}

\newcommand{\cE}{\mathcal{E} }

\newcommand{\cG}{\mathcal{G} }

\newcommand{\beq}[1]{\begin{equation}\label{#1}}
\newcommand{\enq}[0]{\end{equation}}

\author{Boris Bukh\thanks{Supported in part through NSF grants DMS-2154063, DMS-2452120 and by Simons Foundation Fellowship.}\,$\,^{\text{,}}$\footnotemark[2] \and Quentin Dubroff\thanks{Department of Mathematical Sciences, Carnegie Mellon University, Pittsburgh, PA 15213, USA\@.}}

\title{Faster random walks via infrequent steering}

\usepackage[nameinlink]{cleveref}
\usepackage{thmtools}

\newtheorem{theorem}{Theorem}
\newtheorem{lemma}[theorem]{Lemma}

\newtheorem{definition}[theorem]{Definition}

\newtheorem{proposition}[theorem]{Proposition}
\newtheorem{remark}[theorem]{Remark}
\newtheorem{question}[theorem]{Question}
\newtheorem{observe}[theorem]{Observation}

\newcommand*{\eqdef}{\stackrel{\mbox{\normalfont\tiny def}}{=}}  
\newcommand*{\veps}{\varepsilon}                                 
\DeclarePairedDelimiter\abs{\lvert}{\rvert}                      
\newcommand*{\cover}{\mathcal{C}}                                

\newcommand*{\B}{\mathcal{B}}

\newcommand*{\E}{\mathbb{E}}
\newcommand*{\Fp}{\mathbb{F}_p}
\newcommand*{\ww}{\mbox{{\sf w}}}
\newcommand*{\roo}{r}
\newcommand*{\pr}{\mathbb{P}}
\newcommand*{\dist}{\text{dist}}

\begin{document}
\maketitle

\begin{abstract}
Random walks on graphs can be slow. To speed them up, imagine that at each step instead of choosing the neighbor at random, there is a small
probability $\varepsilon>0$ that we can choose it. We show that in this case, at least for graphs of bounded degree, there is a way to steer the walk so that it
visits every vertex in $n^{1+o(1)}$ steps with high probability. The key to this result is a way to decompose arbitrary graphs into small-diameter pieces.
\end{abstract}

\paragraph{Ways to speed up random walks.}
Random walks are a popular tool for exploring complicated sets. Many algorithmic advances hinge on the design of random
walks that are quick to traverse their state spaces. That motivated many modifications to standard random walks to speed them up. These include
non-backtracking walks \cite{nonbacktracking}, choosing more favorable initial
states \cite{uniformstart}, running many walks in parallel \cite{manywalks},
and lifting the chain to a larger state space
\cite{lifting}. Here we study another approach:
infrequent adaptive interventions by a controller.

\paragraph{\texorpdfstring{$\veps$}{ε}-biased random walks.}
In this paper, we consider the $\veps$-time biased random walk ($\eps$-TBRW). This model is identical to the simple
random walk on a graph, except that at each step, there is a small probability $\veps>0$ that
a controller gets to choose the next position of the walk (still among the neighbors of the current vertex).
This model was first introduced by Azar, Broder, Karlin, Linial, and Phillips \cite{ABKLP} in the Markovian setting, i.e., where the controller chooses the next step independently of the history of the walk. The authors of \cite{ABKLP} were interested in this model in part as a model of random walks under corrupted randomness, related to models of influence in Boolean functions studied in Ben-Or and Linial's influential paper \cite{BL}. The time-dependent version, where the controller's bias may depend on the history of the walk, was first studied by Georgakopoulos, Haslegrave, Sauerwald, and Sylvester \cite{GHSS,CRW,HSS}. It is closely related to the choice random walk model (CRW), in which the controller chooses each step of the walk among two independent randomly chosen neighbors of the current vertex. In particular, \cite[Proposition 1]{GHSS} shows that $\eps$-TBRW on a graph $G$ of maximum degree $\Delta$ can be simulated by CRW if $\eps \leq  1/ \gD$.

Formally, a \emph{bias function} on a (connected) graph $G$ is a rule which, for each finite walk $(v_0,v_1,\dots,v_t)$ in $G$, specifies a random neighbor of $v_t$. Equivalently, it assigns a probability distribution on $N(v_t)$ to each such sequence.
Given $\eps \in [0,1]$, a bias function $\varphi$, and a starting vertex $x \in V(G)$, the $\veps$-biased random walk on $G$ under $\varphi$ started at $x$ is the sequence $(X_0,X_1, X_2,\dots)$ where $X_0 = x$, and for each $i$, $X_{i+1}$ is a uniformly chosen neighbor of $X_i$ with probability $1-\veps$ and otherwise is chosen according to the distribution $\varphi(X_{[i]})$, where $X_{[i]} = (X_0,\dots, X_i)$. The \emph{cover time} $\tau_{\varphi,x}(\eps)$ is the first time that each vertex of $G$ has been visited by this walk.
We are primarily interested in the optimal cover time
\[T(G,\eps) \eqdef \inf_{\varphi} \max_{x \in V(G)} \E[\tau_{\varphi,x}(\eps)].\]
We could have insisted on the controller choosing the neighbor of $v_t$ deterministically, rather than randomly.
This would not change the value of $T(G,\eps)$: exposing the controller's random choices in advance yields a distribution on
deterministic bias functions, and averaging over this distribution gives the same law of the walk.

\paragraph{New results.} The cover time of the usual random walk can be as large as $4n^3/27+o(n^3)$ \cite{feige_cover}.
In contrast, with a right strategy, $\eps$-biased walks are much faster.
\begin{proposition}\label{thm:nonlin}
    Let $\eps>0$.
    \begin{enumerate}[label=\alph*), ref=Proposition \thetheorem(\alph*)]
    \item For every $n$-vertex graph $G$ we have
      \[
        T(G,\eps)< n^{2+o(1)}.
      \]
    \item
    There is an $n$-vertex graph $G$ with 
    \[T(G, \eps) > \tfrac{1}{256}n^{2-100\eps}.\]
    \end{enumerate}
\end{proposition}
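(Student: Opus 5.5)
For part (a), the plan is to reduce covering to hitting: it suffices to show that from any start $x$ and any target $y$ the controller can reach $y$ in expected time $n^{1+o(1)}$. Given this, I visit the vertices in a fixed order (say along a DFS tour of a spanning tree, or simply in any order), steering toward the next unvisited vertex each time. This gives $n\cdot n^{1+o(1)}=n^{2+o(1)}$.

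For the hitting bound I use the classical Markovian strategy of \cite{ABKLP}: at each vertex $v$, bias toward a neighbor that lies on a shortest path to $y$. The resulting walk is a Markov chain with transition probabilities $P(v,u)=(1-\eps)/\deg v+\eps\,[u=u_v]$. I compare its hitting time to that of a reversible chain. The cleanest route is the \cite{ABKLP} bound: the stationary weights of the biased chain can be made to change by factors of at most $(1+\Theta(\eps)\deg)$ at each vertex. The hitting time to $y$ is then controlled through the electrical-network/commute-time formula, with edge conductances modified accordingly.

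An alternative I would try first, since it avoids degree dependence, is a potential-function argument. Let $d(v)=\dist(v,y)$, and track the number of "steering opportunities" used. Each time the walk is at distance $k$, there is a chance of at least $\eps$ to decrease $d$. Meanwhile the unbiased random walk from distance $k$ returns to a strictly closer layer in expected time $O(nk)$ or so, by a commute-time bound on the subgraph. Layering gives a hitting time bound of the form $\sum_k (\text{expected time to cross layer } k)$, and I would aim to show this is $n^{1+o(1)}$ by combining the two effects: steering gives exponential-in-depth progress $(\eps)^{j}$ for $j$ consecutive steps, while the random walk only needs polynomial time.

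The main obstacle is exactly this hitting-time estimate with no degree assumption. For bounded degree it follows from \cite{ABKLP}-type stationary-measure distortion being $n^{o(1)}$ only if distances are $O(\log n/\log(1/\eps))$, which fails in general. So I expect to need a decomposition of $G$ into pieces of small diameter $D=o(\log n)$ (as the abstract hints), with steering toward the piece containing $y$ at coarse scale. Within a piece, one forces a path of length $D$ with probability $\eps^{D}=n^{-o(1)}$, attempted roughly $n^{1+o(1)}$ times.

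For part (b), I take a lollipop-type graph: a clique on $n/2$ vertices attached to a path of length $n/2$. Covering requires reaching the far end of the path. On the path, the walk is a $\pm1$ walk with drift $\eps$ toward the end, since the controller's choice is forced up to direction. Each return into the clique costs time about $n$ before re-exiting, since steering inside the clique only helps by a factor $1/(1-\eps)$-ish. The probability of traversing the path before returning is roughly that of a biased gambler's ruin, and I will tune the path length to $n^{c}$-ish so that the count of excursions times the clique time exceeds $n^{2-100\eps}/256$. The constants are chosen generously, e.g.\ with degree at the junction making the hitting probability from the clique $O(1/n)$ per step.
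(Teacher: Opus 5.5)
Both parts have genuine gaps.

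\textbf{Part (a).} Your argument rests on a false intermediate claim. You reduce covering to showing that in \emph{every} graph the controller can hit any target $y$ from any $x$ in expected time $n^{1+o(1)}$. The paper's graph for part (b) refutes exactly this. It has layers $V_1,\dots,V_{k-1}$ with $|V_i|=2^i$ and complete bipartite graphs between consecutive layers. In it, every start $u$ has a target $v\in V_1$ whose expected hitting time is at least $\tfrac{1}{256}n^{2-100\varepsilon}$ under every bias function.

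What does hold in general is the naive-steering bound $\inf_\varphi H_\varphi(u,v)\le 2\varepsilon^{-1}e(G)\le \varepsilon^{-1}\Delta n$. So your vertex-by-vertex scheme works for bounded degree. For dense graphs, however, a single hit can cost nearly $n^2$, and summing $n$ such bounds gives only $n^3$. Your fallbacks do not help either. In the same graph the unbiased walk from distance $1$ is pushed away from the target with probability $4/5$ per step, so there is no $O(nk)$ return time. And forcing a path of length $D$ with probability $\varepsilon^D$ leaves the cost of the failed attempts uncontrolled.

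The missing idea is amortization over many targets. The paper deduces (a) from $f_\Delta(n,\varepsilon)<\Delta\varepsilon^{-1}n^{1+o(1)}$ with $\Delta\le n$, proved as follows:
\begin{itemize}
\item It takes an $(r,K)$-cover $V_1,\dots,V_m$ of $G^k$, where $k=-4\log_{1-\varepsilon}n$ and $r,K\le 4^{\sqrt{\log n}}$.
\item On the $k/2$-neighborhood $W_i$ of $V_i$ it runs the emulable walk with weights $(1-\varepsilon)^{\mathrm{dist}(e,W_i)}$. Under these weights everything outside $N^+_{G^k}(V_i)$ has weight at most $n^{-2}$.
\item Hitting times inside $W_i$ are bounded, via the commute-time identity, by $(2r+1)k\cdot 2\Delta|N^+_{G^k}(V_i)|$.
\item The Matthews bound then covers \emph{all} of $W_i$ at the price of one extra $\log n$ factor, rather than a factor $|W_i|$.
\item Chaining the pieces along a spanning tree of the overlap graph gives the total $32\varepsilon^{-1}\Delta(r+1)Kn\log^2 n$.
\end{itemize}

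\textbf{Part (b).} This also fails, because the lollipop is easy for the controller whatever the path length.
\begin{itemize}
\item The junction $j$ is adjacent to every clique vertex. So from anywhere in the clique the controller reaches $j$ with probability at least $\varepsilon$ per step, giving return time $O(1/\varepsilon)$, not about $n$.
\item At $j$ the controller enters the path with probability at least $\varepsilon$.
\item Steering outward along the path, each excursion reaches the far end before returning to $j$ with probability at least $2\varepsilon/(1+\varepsilon)$, by gambler's ruin.
\end{itemize}
The clique can be covered in $O(n/\varepsilon)$ steps by steering to unvisited vertices. Altogether $T(G,\varepsilon)=O(n/\varepsilon+\varepsilon^{-3})$, which is linear in $n$.

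A lower bound of order $n^{2-O(\varepsilon)}$ requires a strong natural drift away from the target that $\varepsilon$-steering only slightly dents. In the paper's layered graph, a vertex of $V_i$ has $2^{i-1}$ neighbors below and $2^{i+1}$ above, so the layer index moves outward with probability $4/5$. Any strategy induces an $\varepsilon$-biased walk on the path of layers, for which always steering toward $V_1$ is optimal. This walk is a weighted path with ratio $\alpha=\frac{(1-\varepsilon)(4/5)}{\varepsilon+(1-\varepsilon)/5}\ge 4(1-5\varepsilon)$. Hence the expected number of visits to $V_{k-1}$ before hitting $V_1$ is $\pi_{k-1}/\pi_1=\alpha^{k-3}\ge\tfrac{1}{256}n^{2-100\varepsilon}$ for $\varepsilon<1/50$; for larger $\varepsilon$ the statement is trivial.
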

We suspect that part (b) is essentially sharp; specifically, there should be a constant $c > 0$ such that every $n$-vertex graph satisfies $T(G, \eps) < n^{2 - c\eps}$.

Because the construction establishing the lower bound in \Cref{thm:nonlin}(b) has many edges, it is natural to consider sparser graphs.
Let $\cG_\gD(n)$ be the set of connected $n$-vertex graphs with maximum degree at most $\gD$, and let
\[f_\gD(n, \eps) = \max_{G\in \cG_\gD(n)} T(G,\eps).\]
An open question from \cite{OSS} asks:
\begin{question}\label{q:mot}
    Is it true that, for every fixed $\eps>0$ and $\Delta$, there is $C$ such that 
    \[f_\gD(n, \eps) < Cn?\]
\end{question}
\noindent Our principal result is the following bound, which in particular implies \Cref{thm:nonlin}(a).
\begin{theorem}\label{thm:werr}
For any $\gD$ and $\eps > 0$,
\[f_\gD(n,\eps) < \gD \eps^{-1} n^{1 + o(1)}.\]
\end{theorem}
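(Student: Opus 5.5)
We will prove \Cref{thm:werr} by combining a graph decomposition into small-diameter pieces with a steering strategy that is cheap inside each piece. The basic tool is the following observation about the $\eps$-TBRW. Suppose the controller wants to reach a target vertex $y$ at distance $d$ from the current vertex. It always steers along a shortest path towards $y$ whenever it gets control. Each uncontrolled step changes the distance by at most one, while each controlled step decreases it by one. The distance therefore behaves like a walk with drift at least $\eps$ minus the fluctuation from uniform steps, so this alone is not enough. Instead, we use the weaker statement that the probability of $d$ consecutive controlled steps is $\eps^d$. Hence from any vertex the walk hits any vertex at distance $d$ within expected time about $d\,\eps^{-d}$. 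This is useful only when $d$ is tiny, say $d \le \sqrt{\log n}/\log(1/\eps)$, so that $\eps^{-d}=n^{o(1)}$.

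The plan therefore has three steps.

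\textbf{Step 1 (decomposition).} Show that every connected graph of maximum degree $\gD$ has vertex sets $V_1,\dots,V_k$ such that:
\begin{itemize}
\item each $V_i$ has small (weak) diameter $D=o(\log n/\log(1/\eps))$;
\item the pieces are organized along a tree or sequence in which consecutive pieces are adjacent;
\item the total "overhead" $\sum_i |V_i|$ is $n^{1+o(1)}$.
\end{itemize}
Plain low-diameter decompositions (Linial–Saks, Miller–Peng–Xu) only give diameter $O(\log n)$, which is too large because $\eps^{-O(\log n)}$ is polynomial. We will therefore do an iterated or hierarchical decomposition. At scale $r$, ball-growing yields a cluster whose boundary is an $n^{-1/r}$-fraction of its volume, or the cluster reaches radius $r$. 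Taking $r=\sqrt{\log n}$ and recursing $\sqrt{\log n}$ levels gives clusters of radius $\sqrt{\log n}$ with total multiplicity $n^{O(1/\sqrt{\log n})\cdot \sqrt{\log n}}$. I expect to tune this so that the loss is $n^{o(1)}$.

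\textbf{Step 2 (covering one piece).} Within a piece of diameter $D$ the controller visits each vertex in turn by the greedy steering above. Each target costs expected time $\eps^{-D}\cdot \mathrm{poly}(D)$, and the factor $\gD\eps^{-1}$ arises from the final approach step. Summing over the vertices of the piece gives $|V_i|\,n^{o(1)}$.

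\textbf{Step 3 (traversing pieces).} The controller moves between adjacent pieces along the tree order, depth-first, at the same cost per transition. The expected time then sums to $n^{1+o(1)}\gD\eps^{-1}$. The maximum over starting vertices $x$ is harmless because we first travel to a root piece.

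The main obstacle is Step 1. The walk can wander off during failed attempts. Each failed attempt, however, lasts only $O(1)$ steps before we restart from wherever we are, so the repeated attempts still cost only geometric-in-$\eps^{-D}$ time. The real difficulty is obtaining diameter $o(\log n)$ with only $n^{o(1)}$ total overlap on arbitrary bounded-degree graphs, including expanders, where balls grow exponentially. For expanders the overlap must come from covering by balls of radius $\sqrt{\log n}$ each containing $\gD^{\sqrt{\log n}}=n^{o(1)}$ vertices. This suggests the decomposition should be a cover by balls, such as a maximal $D$-separated net, with each vertex lying in at most $\gD^{D}=n^{o(1)}$ balls. This already gives the bound: visit the net points in an order forming a walk of total length $O(n)$ in a spanning tree of the net's adjacency graph, and exhaust each ball. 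I would try this net-based cover first.
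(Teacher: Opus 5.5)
\paragraph{The gap: the steering estimate fails.} Your basic steering estimate is false, and Step~2 (as well as the net-based plan at the end) rests on it. Steering along a shortest path gives probability $\eps^d$ of hitting $y$ within the next $d$ steps only while the walk is at distance $d$ from $y$. Nothing keeps it there. The sentence ``each failed attempt lasts only $O(1)$ steps before we restart from wherever we are'' hides the fact that after failures the walk is typically farther away, and a restart from distance $d'$ costs about $\eps^{-d'}$. In graphs of exponential growth the distance to $y$ even drifts upward. For example, in a random $\gD$-regular graph a typical vertex has only one of its $\gD$ neighbors closer to $y$, so for small $\eps$ the walk escapes to distance of order $\log n$. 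As the paper points out, on an expander moving from a vertex to a prescribed neighbor takes $n^{1-\Omega(\eps)}$ expected steps under \emph{every} bias function. Consequently any scheme that covers a piece by targeting its vertices one at a time costs $n^{2-O(\eps)}$ on an expander, no matter how small the pieces are.

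\paragraph{What is missing.} You need two ingredients. The first is a way to confine the walk; in general this exists only at scale $k=-4\log_{1-\eps}n$ (of order $\eps^{-1}\log n$) around a set, not at your scale $D=o(\log n)$. The second is a way to cover a piece without paying a full hitting time per vertex.

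The paper gets confinement from \Cref{prop:emu}. The controller can emulate the reversible walk with edge weights $(1-\eps)^{\dist(e,W)}$, so edges at distance at least $k/2$ from $W$ have weight at most $(1-\eps)^{k/2}=n^{-2}$.

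It gets cheap covering from the commute-time identity and the Matthews bound. Hitting times inside $W_i$ are at most $R_{\mathrm{eff}}$ times the total weight, i.e.\ $O(\gD k(r+1)\,|N^+_{G^k}(V_i)|)$. This is linear rather than exponential in the radius, and covering $W_i$ costs only an extra $O(\log n)$ factor.

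This is why the right decomposition is an $(r,K)$-cover of the power graph $G^k$, with overlap measured by $|N^+_{G^k}(V_i)|$ (the neighborhoods the walk actually visits). It is also why $r,K=n^{o(1)}$ is affordable (\Cref{thm:bddcov}).

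A $D$-net in $G$ does not control these $k$-neighborhoods. On an expander each of them has size $n^{\Omega(1)}$ (all of $V(G)$ for small $\eps$), so the total overlap is $n^{1+\Omega(1)}$. Moreover $G^k$ has unbounded degree, so the multiplicity bound $\gD^D$ is unavailable there. The paper instead constructs covers of arbitrary graphs by a multiscale random choice of balls (\Cref{thm:efficient cover}(b)), and deduces \Cref{thm:werr} from \Cref{thm:bddcov}.
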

By Markov's inequality, \Cref{thm:werr} also implies the corresponding high-probability statement: multiplying the expected cover time bound by any slowly growing function 
makes the failure probability \(o(1)\), and so the cover time is still \(\Delta \eps^{-1}n^{1+o(1)}\) with high probability.

A positive answer to \Cref{q:mot} would imply that for any pair of vertices there is a strategy so that the commute time between the vertices is $O_{\Delta,\eps}(n)$.
We show that such a strategy does indeed exist. The proof is essentially the same as the proof of \cite[Theorem 1.1]{CRW}.
\begin{proposition}\label{lem:hit}
For any graph $G$ and $u,v \in V(G)$,
    \[\inf_\vp H_\vp(u,v) \leq 2\eps^{-1}e(G)\]
    where $H_\vp(u,v)$ is the expected hitting time of $v$ in an $\eps$-TBRW (with bias function $\vp$) started at $u$ (and $e(G)$ is the number of edges of $G$).
\end{proposition}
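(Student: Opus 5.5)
We use the simplest steering: fix a shortest-path tree toward $v$ (or any BFS layering by $d(x)=\dist(x,v)$), and let $\vp$ be Markovian, always moving to a neighbor with $d$ one smaller. We then bound the hitting time of $v$ with a potential function. As in \cite[Theorem 1.1]{CRW}, the natural candidate is built from effective resistances or hitting times of the simple random walk. Let $h(x)=H^{\mathrm{SRW}}(x,v)$ be the simple random walk hitting time of $v$. Then
\[h(x)=1+\frac1{\deg x}\sum_{y\sim x}h(y)\quad (x\ne v),\qquad h(v)=0.\]

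The key steps are as follows.

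First, define the bias by the potential itself rather than by BFS. At each $x\ne v$, let $\vp$ move to a neighbor $y^*$ minimizing $h(y)$. Under the $\eps$-TBRW with this $\vp$, the expected one-step change of $h$ at $x\neq v$ is
\[
(1-\eps)\bigl(\mathrm{avg}_{y\sim x}h(y)-h(x)\bigr)+\eps\bigl(h(y^*)-h(x)\bigr)
= -(1-\eps)+\eps\bigl(h(y^*)-h(x)\bigr).
\]
So we need a lower bound on $h(x)-\min_{y\sim x}h(y)$. This drift estimate is the main obstacle, since it need not be large pointwise.

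Second, to get around this, we instead work with a modified potential. Let $g$ solve the equation for the walk that, with probability $\eps$, steps to the minimizer and otherwise steps uniformly. This is a Markov chain, so its hitting time $g(x)$ satisfies
\[g(x)=1+(1-\eps)\,\mathrm{avg}_{y\sim x}g(y)+\eps\min_{y\sim x}g(y).\]
Here $g$ is the value of an optimal stopping/control problem, and by standard MDP theory a Markovian optimal strategy exists, so $\inf_\vp H_\vp(u,v)=g(u)$. It therefore suffices to bound $g(u)\le 2\eps^{-1}e(G)$.

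Third, bound $g$ by summing over edges, following \cite{CRW}. Multiply the equation at each $x\ne v$ by $\deg x$ and sum. This gives
\[
\sum_{x\neq v}\deg(x)\,g(x)=2e(G)-\deg v+(1-\eps)\sum_{x\ne v}\sum_{y\sim x}g(y)+\eps\sum_{x\neq v}\deg(x)\,m(x),
\]
where $m(x)=\min_{y\sim x}g(y)$. Write $\sum_{x\ne v}\sum_{y\sim x}g(y)=\sum_y g(y)(\deg y-\mathbf 1[y\sim v])$, and use $g(v)=0$. Rearranging gives
\[
\eps\sum_{x\ne v}\deg(x)\bigl(g(x)-m(x)\bigr)\le 2e(G)-(1-\eps)\sum_{y\sim v}g(y)+\ldots
\]
Along the greedy path $u=x_0,x_1,\dots,x_k=v$, with $x_{i+1}$ the minimizer at $x_i$, we have $g(u)=\sum_i\bigl(g(x_i)-m(x_i)\bigr)\le\sum_{x\ne v}\bigl(g(x)-m(x)\bigr)$, and all the summands are nonnegative. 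Combining this with the identity, and using $\deg\ge1$, yields $g(u)\le 2e(G)/\eps$.

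The delicate point is the sign bookkeeping in the rearrangement. The term $(1-\eps)$ times the averaged part must be absorbed by the left side; this works because $\sum_{x\ne v}\deg(x)g(x)\ge\sum_{x\ne v}\sum_{y\sim x}g(y)$ fails in general. If it fails, we would instead telescope with the flow/resistance identity from \cite{CRW}, namely commute time equals $2e(G)R_{\mathrm{eff}}$, applied to the controlled chain.
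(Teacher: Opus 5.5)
Your third step is correct, and it closes exactly, so the ``delicate point'' does not exist. Multiply the Bellman equation at $x\ne v$ by $\deg x$ and sum. With your own computation $\sum_{x\ne v}\sum_{y\sim x}g(y)=\sum_{y\ne v}\deg(y)g(y)-\sum_{y\sim v}g(y)$, the terms $(1-\eps)\sum_{y\ne v}\deg(y)g(y)$ cancel, leaving the identity
\[
\eps\sum_{x\ne v}\deg(x)\bigl(g(x)-m(x)\bigr)=2e(G)-\deg v-(1-\eps)\sum_{y\sim v}g(y)\le 2e(G).
\]
You therefore need no inequality of the form $\sum_{x\ne v}\deg(x)g(x)\ge\sum_{x\ne v}\sum_{y\sim x}g(y)$. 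That inequality in fact always holds since $g\ge 0$, contrary to what you wrote. Drop the last paragraph. Its fallback via the commute-time identity would not work anyway, because the greedy controlled chain is in general not reversible.

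Two details should be stated explicitly:
\begin{itemize}
\item The Bellman equation gives $g(x)\ge 1+m(x)$ for $x\ne v$, so $g$ drops by at least $1$ at each greedy step. This is what guarantees that the greedy path reaches $v$ through distinct vertices, which your bound $\sum_i(g(x_i)-m(x_i))\le\sum_{x\ne v}(g(x)-m(x))$ relies on.
\item Only the attainability direction of the MDP claim is needed. A solution $g$ of the Bellman equation exists (e.g.\ by policy iteration). The stationary greedy policy's hitting-time vector solves the same nonsingular linear system, so it equals $g$.
\end{itemize}

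This is a genuinely different proof from the paper's. The paper fixes an explicit randomized Markovian bias $\vp_{\{v\}}$, which mixes uniform choices among neighbours closer to $v$ and the remaining neighbours with a tuned probability. Under it, the $\eps$-TBRW is exactly the reversible walk on $(G,\ww_{\{v\}})$ with $\ww(e)=(1-\eps)^{\dist(e,v)}$. The paper then bounds the hitting time layer by layer via the return-time formula $H^+(u,u)=1/\pi(u)$, each edge contributing $2\sum_k(1-\eps)^k=2\eps^{-1}$.

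Your argument instead uses the optimal, implicitly defined policy, a double-counting identity, and telescoping along the greedy path. It needs no reversibility and no emulation lemma, and it gives the slightly sharper bound $\eps^{-1}(2e(G)-\deg v)$.

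The paper's route buys an explicit strategy whose law is a reversible weighted random walk, for arbitrary target sets $U$, together with the localized bound $2\eps^{-1}(e(G[W])+1)$ of Lemma~\ref{lem:hitt}. Both are reused in the proof of Theorem~\ref{thm:bddcov}, through the commute-time identity and effective resistances in $G_i$. Your non-reversible optimal chain would not plug into that machinery.
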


\Cref{lem:hit} is proved by analyzing the naive bias strategy $\vp$ that always steers the walk towards~$v$. An interesting question is whether the naive strategy of biasing toward the closest uncovered vertices has expected cover time $n^{1+o(1)}$, or perhaps even $O(n)$. Analyzing this highly non-Markovian strategy seems difficult. For context, we recall an intriguing open problem from \cite{AKchoice} for the CRW model: at each step of CRW on the $\sqrt{n} \times \sqrt{n}$ grid, the controller chooses the vertex (among the two sampled vertices) which has been visited the fewest number of times. Based on numerical evidence, the authors of \cite{AKchoice} conjectured that under this bias strategy, the cover time should be $o(n \log^2 n)$, rather than $\Theta(n\log^2 n)$, which is the cover time of the simple random walk on the grid.

On the $n$-vertex grid $G$, it is not difficult to show that there is a bias strategy for $\eps$-TBRW (and also CRW) which can move from a vertex to a given neighbor in expected time $O(1)$. Thus $\eps$-TBRW can traverse vertices along a spanning tree in $G$ in expected time $O(n)$, which establishes \Cref{q:mot} for the $n$-vertex grid. This strategy can be generalized to any graph of ``uniform polynomial growth,'' meaning that there is a polynomial $p$ such that for each $v \in V(G)$ the set $B^k(v)$, the set of vertices of distance at most $k$ from $v$, satisfies $|B^k(v)| < p(k)$.
\begin{proposition}\label{prop:poly}
    For any polynomial $p$ and $\eps > 0$, there is $C$ such that if $G$ is a graph with $|B^k(v)| < p(k)$ for all $v \in V(G)$ and $k \in \mathbb{N}$, then
    \[T(G,\eps) < Cn.\]
\end{proposition}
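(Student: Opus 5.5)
The plan is to generalize the grid strategy sketched just above: show that there is a constant $C_0=C_0(p,\eps)$ such that, from any vertex $u$, the controller can steer the walk to any prescribed neighbor $w$ of $u$ in expected time at most $C_0$. Granting this, fix a spanning tree $T$ of $G$ and its depth-first traversal, a closed walk of length $2(n-1)$ that uses only edges of $G$. The controller runs the one-edge strategy successively for each step of this traversal. By linearity of expectation, every vertex is visited after expected time at most $2(n-1)C_0$, uniformly in the starting vertex, so $T(G,\eps)<Cn$.

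First, a degree bound: $|B^1(v)|<p(1)$, so the maximum degree $\gD$ is at most $p(1)$.

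The one-edge claim is proved by restricting attention to the ball $B=B^k(u)$, for a radius $k$ chosen depending only on $p$ and $\eps$. Inside $B$ the controller uses the naive strategy of \Cref{lem:hit}, always steering toward $w$. The hitting-time bound must be localized, since $2\eps^{-1}e(G)$ is far too large. The idea is that the distance $D_t=\dist(X_t,w)$ behaves like a walk with drift: at each step it decreases with probability at least $\eps+(1-\eps)/\gD$ and increases with probability at most $1-\eps$. This drift need not be favourable on its own. Instead I would adapt the electrical/potential argument behind \Cref{lem:hit} to the finite graph $B$, with reflection at the boundary replaced by a restart. This should give an expected hitting time of $w$, before the walk leaves $B^{k}(u)$, that is bounded by $2\eps^{-1}e(B)\le 2\eps^{-1}\gD p(k)$. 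Escapes from $B$ must also be handled, so I would argue as follows. Starting from $u$, there is a probability $q\ge \eps^{k}$-type lower bound of reaching $w$ quickly, by using $\eps$-steps only. More usefully, I would argue that the expected time to hit $w$ within the ball, conditioned on not escaping, is $O(\eps^{-1}p(k))$. If the walk escapes, the controller first steers back to within distance $1$ of $u$ by repeating the same procedure along a shortest path, which has length at most $k$.

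To make the escape-and-return recursion converge, I would use polynomial growth quantitatively. On the ball $B^k(u)$ the $\eps$-biased walk steered toward the center has a stationary-type measure concentrated near the center. Comparing with the graph metric, the bias multiplies the weight of moving inward by a factor at least $1+\eps\gD$ relative to moving outward. Hence the probability of reaching distance $k$ before returning to $u$ decays like $\lambda^{k}\cdot |B^k(u)|\le \lambda^k p(k)$ for some $\lambda=\lambda(\eps,\gD)<1$, which tends to $0$ because $p$ is polynomial. Choose $k$ so that this escape probability is below $1/2$. Each attempt to cross the edge $uw$ then costs expected time $O_{p,\eps}(1)$ and succeeds with probability bounded below, giving $C_0$.

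The main obstacle is this escape estimate. The naive steering creates only an $\eps$-fraction bias, which can be swamped by the many outward edges. The drift of $\dist(X_t,u)$ need not be negative, so one needs a genuinely global argument, for instance a Lyapunov function of the form $\lambda^{-\dist}$ weighted by ball sizes, or a resistance comparison. What I would try first is to bound the expected number of visits to each vertex $x\in B^k(u)$ before returning to $u$ by $c^{\dist(x,u)}$ times the normal random-walk count. I would then sum over $x$ using $|B^j(u)|<p(j)$, so that polynomial growth beats the exponential gain from steering.
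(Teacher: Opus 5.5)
\paragraph{Verdict: there is a genuine gap.} Your outer reduction is the same as the paper's: traverse a spanning tree and pay a bounded expected cost per edge. The one-edge claim, however, is never established. The escape estimate, which you yourself flag as the main obstacle, is left as a list of things to try, and the recursion cannot close without it.

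Even the quantitative target is off. Making the escape probability $q$ smaller than $1/2$ is not enough. Returning along a shortest path costs up to $k$ further one-edge crossings, i.e.\ roughly $kC_0$, so you get $C_0\le A+q(k+1)C_0$. That needs $q(k+1)<1$, plus an a priori reason why $C_0<\infty$.

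The in-ball estimate ``conditioned on not escaping'' is also not justified as stated, because conditioning changes the law of the walk. You would have to bound $\E[\min(\tau_w,\tau_{\mathrm{esc}})]$ instead.

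\paragraph{The missing idea: no localization is needed.} Steer with $\vp_{\{w\}}$ from \Cref{prop:emu}. The walk is then exactly the reversible walk on $(G,\ww_{\{w\}})$, whose edge weights $(1-\eps)^{\dist(e,w)}$ decay exponentially away from $w$. Take $u$ adjacent to $w$ and apply \eqref{eq:obyo} with $U=\{w\}$ and $i=1$; this is Kac's formula $H^+(w,w)=\pi(w)^{-1}$. It gives
\[
H(u,w)\le d(w)\,H^+(w,w)=2\sum_{e\in E(G)}(1-\eps)^{\dist(e,w)}\le 2\gD\sum_{j\ge 0}p(j)(1-\eps)^j .
\]
The last inequality holds because at most $\gD|B^j(w)|<\gD p(j)$ edges lie at distance $j$ from $w$, and $\gD\le p(1)$. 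The right-hand side is a constant depending only on $p$ and $\eps$.

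The bound $2\eps^{-1}e(G)$ of \Cref{lem:hit}, which you rejected as too large, comes from this same estimate only after bounding these weights trivially. If you keep the decay, excursions far from $w$ need not be prevented at all. They are automatically paid for by the exponentially small stationary mass out there.

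Your heuristic $\lambda^k p(k)$ escape bound is in fact true for this weighted walk, via a Nash-Williams lower bound on effective resistance. But once you use the weighted walk, the global return-time bound makes the ball, the restarts and the escape-and-return recursion unnecessary.
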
 

When the polynomial growth condition fails it might take a long time to move from a vertex to a neighbor. For example, on an expander, this takes at least $n^{1 - \Omega(\eps)}$ steps in expectation for any bias strategy. The fact that the bias in $\eps$-TBRW is not strong enough to enforce that the walk stays confined to a specified region is one of the main issues for answering \Cref{q:mot}. 

\paragraph{Proof idea and the main lemma.}
In the context of \Cref{thm:werr}, a particularly easy case is when the graph is an expander: the simple random walk covers an expander in time $O(n \log n)$. More generally, for any graph with hitting times bounded by $h$, the classical Matthews bound \cite{Matth} implies an upper bound of $O(h \log n)$ on the cover time of the simple random walk. In particular, on a graph of diameter $d$ and maximum degree $\gD$, the Matthews bound is $O(d \gD n \log n)$, and thus \Cref{thm:werr} is easy for graphs of diameter $n^{o(1)}$.

In order to prove \Cref{thm:werr}, we cover a general graph by small-diameter pieces and use the Matthews bound on each piece. There are two related issues with this approach: (i) as noted in the previous paragraph, the bias in $\eps$-TBRW is not necessarily powerful enough to keep the walk confined in a given portion of the graph. Without additional information, we can only guarantee that the walk likely stays confined to a ball of radius $O(\log n)$ around any given set. (ii) If a vertex is within distance $O(\log n)$ from many sets in the cover, then it may be visited many times as our strategy attempts to cover the graph. Thus we require a cover of our graph by small-diameter pieces which are in some sense well-separated and not too highly overlapping. We capture these notions formally in the next definition, where \(N(U)\) and \(N^+(U)\eqdef U\cup N(U)\) denote the open and closed neighborhoods of a vertex set \(U\) in \(G\).

\begin{definition}\label{def:rKcover}
    An $(r,K)$-cover of a graph $G$ is a collection of subsets $V_1,\dots, V_m$ of $V(G)$ such that 
    \begin{itemize}
        \item $V(G) = \cup_i V_i,$
        \item the radius of each $G[V_i]$ is at most $r$, and
        \item $\sum_i |N^+(V_i)| \leq Kn.$
    \end{itemize}
\end{definition}

The strategy outlined above \Cref{def:rKcover} is used to establish the following result.
\begin{theorem}\label{thm:bddcov}
    If every $n$-vertex graph admits an $(r,K)$-cover, then 
    \[f_\gD(n,\eps) < 32\eps^{-1} \gD (r+1)Kn  \log^2 n.\]
\end{theorem}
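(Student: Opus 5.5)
Fix an $(r,K)$-cover $V_1,\dots,V_m$ of $G\in\cG_\gD(n)$. The strategy processes the pieces one at a time in a fixed order. For the current piece $V_i$ with center $c_i$ (so every vertex of $V_i$ is within distance $r$ of $c_i$ in $G[V_i]$), the controller runs a loop: steer towards $c_i$ along a shortest path in $G[V_i]$, and whenever steering is not in force the walk moves uniformly. The goal is to show that piece $i$ is covered in expected time about $\eps^{-1}\gD(r+1)|N^+(V_i)|\log^2 n$ (up to the constant 32). Summing over $i$ and using $\sum_i|N^+(V_i)|\le Kn$ gives the bound. Transitions between pieces cost little: by \Cref{lem:hit} applied to $G$, reaching the next piece costs $O(\eps^{-1}\gD n)$, and the number of pieces is $m\le n$. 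A cleaner option is to order the pieces along a spanning-tree tour, or to absorb the transition into the per-piece cost by steering to $c_{i+1}$.

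The per-piece estimate comes from a Matthews-type argument on an auxiliary chain. I would first prove a hitting-time lemma in the spirit of \Cref{lem:hit}: for any $u\in N^+(V_i)$ and $v\in V_i$, one bias strategy makes the walk hit $v$ quickly. The natural plan is to steer along a BFS path in $G[V_i]$ toward $v$, of length at most $2r$. To keep the walk from wandering far from $V_i$ I would use excursions. Each time the walk is at a vertex of $V_i$ we attempt a direct ``run'' to $v$. A run of length $\ell\le 2r$ succeeds if the next $\ell$ steps each go the right way. This happens with probability at least $(\eps+(1-\eps)/\gD)^{\ell}$, which is too small for $r$ large.

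So instead I would use the electrical/commute-time argument behind \Cref{lem:hit}. Biasing toward $v$ with probability $\eps$ at each step makes the distance $d(X_t,v)$, measured within $G[N^+(V_i)]$, a process whose expected increment is at most $-\eps$ plus the simple random walk's drift. Comparing with the potential $h(x)=$ hitting time in the graph $G[N^+(V_i)]$ (or a shortest-path potential) gives an expected hitting time of order $\eps^{-1}\gD(r+1)|N^+(V_i)|$. Here $|N^+(V_i)|\gD$ bounds the edges and $r+1$ bounds the relevant resistance/diameter. Leaving $N^+(V_i)$ is handled by restarting: each excursion outside is re-steered back, and I charge the cost to the $\log n$ factor. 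That factor comes from showing that with probability $1-1/n^2$ the walk stays within distance $O(\log n)$, via the drift toward the center.

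Matthews' bound then turns the maximal hitting time $h_i$ within the piece into an expected cover time $h_i\cdot O(\log n)$. The controller alternates targets, at each phase steering toward a uniformly random uncovered-order target, following the standard proof of Matthews with random permutation. Together with the $\log n$ from confinement this gives the $\log^2 n$.

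The main obstacle is confinement, issue (i) in the discussion before \Cref{def:rKcover}. The walk may leave $N^+(V_i)$ and spend time elsewhere, so the hitting-time bound must count all steps, not only those inside the piece. I would try to show that under steering toward $c_i$, the distance to $V_i$ is dominated by a biased walk with drift at least $\eps$ toward $V_i$ whenever outside. Then excursions have geometric tails, and the total time outside is at most a constant factor times the time spent at the boundary $N(V_i)$. That time is in turn bounded by degree-weighted visits to $N^+(V_i)$, which is exactly why the closed neighborhood appears in the cover condition.
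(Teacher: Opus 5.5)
\textbf{There is a genuine gap, and it is at the confinement step you flag as the main obstacle.}

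You take an $(r,K)$-cover of $G$ itself. You then claim that, under steering, the distance to $V_i$ has inward drift at least $\eps$ whenever the walk is outside. This contradicts your own correct remark that the increment is $-\eps$ \emph{plus} the simple random walk's drift. In a tree-like region the uniform part of a step moves outward with drift up to $(1-\eps)(\gD-2)/\gD$, which swamps $\eps$.

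So excursions from $N^+(V_i)$ are not geometrically short. The time spent near piece $i$ is governed by the weight $\sum_e (1-\eps)^{\dist(e,V_i)}$. This can be dominated by edges far from $V_i$ (anywhere up to distance of order $\eps^{-1}\log n$) and be polynomially larger than $|N^+(V_i)|$. Knowing that the walk stays within distance $O(\log n)$ does not help, since that ball can be huge.

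A concrete counterexample to your per-piece lemma: on a bounded-degree expander, the singletons form a $(0,\gD+1)$-cover of $G$. Your lemma would then give an $O(\eps^{-1}\gD^2)$ hitting time from a vertex to its neighbor, whereas any bias strategy needs $n^{1-\Omega(\eps)}$ steps. If your argument worked, it would give an $O(\eps^{-1}\gD^2 n\log^2 n)$ bound for every bounded-degree graph with no cover hypothesis at all. Your first option for transitions also fails: $O(\eps^{-1}\gD n)$ per transition, over $m$ pieces (possibly of order $n$), is quadratic.

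\textbf{The missing idea is to apply the hypothesis to the power graph $G^k$,} with $k=-4\log_{1-\eps}n\approx 4\eps^{-1}\ln n$. This is why the hypothesis is stated for all $n$-vertex graphs, not just those of maximum degree $\gD$. Then $|N^+_{G^k}(V_i)|$ counts all vertices within $G$-distance $k$ of $V_i$, which is precisely the region carrying non-negligible weight.

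With $W_i$ the $k/2$-neighborhood of $V_i$, the paper uses one Markovian strategy per piece, emulating the walk on $(G,\ww_{W_i})$. Three facts then combine:
\begin{itemize}
\item Edges incident to vertices at distance $>k$ from $V_i$ have weight at most $(1-\eps)^{k/2}=n^{-2}$, so the total weight is at most $2\gD|N^+_{G^k}(V_i)|$.
\item Radius $r$ in $G^k$ yields unit-resistance paths of length at most $(2r+1)k$ inside $W_i$.
\item The commute-time identity then bounds all hitting times within $W_i$ by $2\gD|N^+_{G^k}(V_i)|(2r+1)k$.
\end{itemize}

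So one $\log n$ comes from $k$ and the other from Matthews. Matthews applies verbatim because the local walk is a fixed reversible chain. By contrast, your steering toward successive individual targets makes the walk depend on the random order, so the $1/j$ symmetry in Matthews' argument is no longer available.

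Transitions cost nothing extra in the paper's scheme. It runs a depth-first exploration of a spanning tree of the overlap graph of the $W_i$, switching strategies at fixed vertices of $W_i\cap W_j$. This makes each piece's active time distributed exactly as a cover-and-return time of its local chain.
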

Here and thereafter $\log$ denotes logarithm to base~$2$.

This theorem reduces the probabilistic problem of constructing good strategies for $\eps$-TBRW to an extremal problem about general graphs. \Cref{thm:werr} follows immediately from our next result on the existence of covers.
\begin{theorem}\label{thm:efficient cover}\
\begin{enumerate}[label=\alph*)]
  \item Every graph admits a $(2^{k-1}-1,n^{1/k+o(1)})$-cover for any positive integer $k$.
  \item Every graph admits a $(4^{\sqrt{\log n}}, 4^{\sqrt{\log n}})$-cover if $n$ is large enough.
\end{enumerate}
\end{theorem}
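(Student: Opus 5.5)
We build the cover greedily by ball growing, in the style of sparse-partition constructions (Awerbuch--Peleg). The cost quantity is $\sum_i |N^+(V_i)|$, which is at most $\sum_i |V_i| + \sum_i |N(V_i)|$. So it suffices to produce pieces that are disjoint, or at least of bounded overlap, and whose boundaries are small relative to their sizes.

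The basic step is as follows. Pick any uncovered vertex $v$ in the current remaining graph $H$ (initially $H=G$). Consider the balls $B_H^j(v)$ for $j=0,1,2,\dots$. If $|B_H^{j+1}(v)| > \lambda |B_H^{j}(v)|$ held for every $j<r$, then $|B_H^{r}(v)|>\lambda^{r}$. Choosing $\lambda^r \ge n$ makes this impossible, so some $j\le r$ satisfies $|N^+_H(B^j_H(v))| \le \lambda |B^j_H(v)|$.

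Set $V_i = B_H^j(v)$, which has radius at most $j\le r$ in $H$. We then delete $V_i$ from $H$ and repeat. Each deletion leaves $\sum_i|N^+_H(V_i)| \le \lambda n$, since the $V_i$ are disjoint.

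The difficulty is that $N^+$ in the definition is taken in $G$, not in $H$. A vertex $u\in V_i$ may also be adjacent to earlier pieces, but that is harmless. The real problem is that $N_G(V_i)$ contains vertices of earlier pieces, and these are not counted by the $H$-boundary.

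Here is the fix. Each edge between $V_i$ and an earlier piece $V_{i'}$ ends at a vertex $w \in V_{i'}$. That $w$ lies in $N_{G}(V_{i'}^{c})$, i.e.\ on the inner boundary of $V_{i'}$. So we should charge the inner boundary of each piece as well.

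To do this we strengthen the growth condition to control the full $G$-neighborhood. We stop at radius $j$ when $|B^{j+1}_H(v)| \le \lambda|B^j_H(v)|$. The inner boundary of $B^{j+1}$ and the outer boundary of $B^{j}$ are both contained in $B^{j+1}\setminus B^{j-1}$.

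We then take the piece to be $V_i=B^{j}_H(v)$, with the larger ball $B^{j+1}_H(v)$ as a buffer that absorbs the later contributions. Concretely, we use two nested radii and require $|B^{j+1}|\le\lambda|B^{j-1}|$. By the same pigeonhole argument this is achievable within $2r$ steps, provided $\lambda^{r}\ge n$.

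The total charge to vertex $w$ is then bounded by how many times $w$ lies on a piece's outer boundary in $G$. We bound this by $\sum_i |B^{j_i+1}\setminus B^{j_i-1}| \le \lambda\sum_i|V_i|$. Double counting then gives $K \le O(\lambda)$ with $r = O(\log_\lambda n)$.

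This already gives a trade-off $r\approx \log n/\log\lambda$ with $K\approx\lambda$. Taking $\lambda = 4^{\sqrt{\log n}}$ gives $r \approx \sqrt{\log n}/2 \le 4^{\sqrt{\log n}}$, which proves part (b) comfortably.

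For part (a) we need radius $2^{k-1}-1$ with $K=n^{1/k+o(1)}$, i.e.\ $\lambda = n^{1/k}$. This is the regime where the expansion factor per step is huge and only $k$ growth steps are allowed.

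Because the radius is exponential in $k$ rather than linear, we would grow along radii $0,1,3,7,\dots,2^{k-1}-1$, i.e.\ $r_{s+1}=2r_s+1$. We stop at the first $s$ with $|B^{r_{s+1}}_H(v)|\le n^{1/k}|B^{r_s}_H(v)|$, and such an $s<k$ exists by pigeonhole.

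The doubling of radii is exactly what the charging needs. Suppose a vertex $u$ lies within distance $1$ of the piece $B^{r_s}(v)$. Then everything within distance $r_s$ of $u$ lies inside $B^{2r_s+1}(v)=B^{r_{s+1}}(v)$. So a later piece that touches $u$ and has radius at most $r_s$ can be charged against the buffer.

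I expect the main obstacle to be this charging argument. The later pieces can have larger radius than earlier ones, and the double-counting must be arranged so that each vertex is charged $n^{1/k+o(1)}$ times in total. The $n^{o(1)}$ slack presumably comes from bucketing the pieces by their stopping level $s$ and by dyadic size, which costs factors of $k$ and $\log n$.

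The first thing to try is processing the levels in order of decreasing radius. Alternatively, one can restart the greedy procedure separately for each scale and use a union over the $k$ levels.
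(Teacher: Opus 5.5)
\textbf{There is a genuine gap: the charging step is invalid.} When pieces are grown in the residual graph $H$ and then deleted, the part of $\sum_i|N^+_G(V_i)|$ not controlled by the $H$-boundaries is
$\sum_i|N_G(V_i)\cap(V_1\cup\dots\cup V_{i-1})|$. In this sum, a vertex $w$ of an earlier piece is counted once for \emph{every} later piece adjacent to it, which can be up to $\deg w$ times. Your bound $\sum_i|B^{j_i+1}\setminus B^{j_i-1}|\le\lambda\sum_i|V_i|$ counts $w$ only once, so the double counting fails. The buffer does show that at most $|N_H(V_{i'})|\le\lambda|V_{i'}|$ later pieces touch $V_{i'}$. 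But each of them may be adjacent to the whole inner boundary of $V_{i'}$, so the charge can be of order $\lambda|V_{i'}|^2$.

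Here is a concrete counterexample. Let $s=\lceil n/\lambda\rceil$ and $m=(n-s)/2$. Take a vertex $v$, a set $A$ of $s-1$ neighbours of $v$, a set $I$ of $m$ vertices each joined to one vertex of $A$, and a set $O$ of $m$ further vertices, with $I\cup O$ inducing $K_{m,m}$. Started at $v$, your two-radius rule does not stop at $j\le 1$, since $|B^2|=s+m>\lambda|B^0|$. It stops at $j=2$, since $|B^3|=n\le\lambda|B^1|$. So $V_1=\{v\}\cup A\cup I$. The residual graph is edgeless on $O$, so each $u\in O$ becomes a singleton piece with $N^+_G(\{u\})=\{u\}\cup I$. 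Hence $\sum_i|N^+_G(V_i)|\ge m^2\approx n^2/4$, even though this graph has radius $3$.

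The one-radius rule, and your radii $0,1,3,7,\dots$, fail in the same way on $K_{a,b}$ with $a\approx n/\lambda$ started on the larger side, giving $K\approx n/\lambda$. So part (b) is not established, and for part (a) you leave the charging open in any case.

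\textbf{The missing idea.} The paper abandons both disjointness and the residual graph. Its pieces are balls $B(v,2^i-1)$ of $G$ itself, possibly overlapping. Then $N^+(B(v,2^i-1))\subseteq B(v,2^i)$ is bounded by the piece's own next ball, and pieces never interact; the overlap is paid for by sparse random sampling.

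With $t_i=(n\log n)^{i/k}$, only centres in $S_i=\{v:|B(v,2^i)|\le t_{i+1}\}$ are used at scale $i$. Each is kept independently with probability $p_i=\min(1,2\log n/t_i)$. The expected cost is therefore at most $\sum_i p_i t_{i+1} n\le 2kn^{1+1/k}\log^{1+1/k}n$.

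Your doubling observation is exactly what gives coverage. Suppose $|B(v,2^i-1)\cap S_i|<t_i$ for all $i$. Then inductively some $u_i\in B(v,2^i-1)\setminus S_i$ exists, and $B(u_i,2^i)\subseteq B(v,2^{i+1}-1)$. This forces $|B(v,2^{i+1}-1)|\ge t_{i+1}$ at each step, and finally $|B(v,2^k-1)|\ge t_k>n$, a contradiction. So every $v$ has a scale at which it is missed with probability below $n^{-2}$. Part (b) is then part (a) with $k=2\sqrt{\log n}$.
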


We do not know if the function $4^{\sqrt{\log n}}$ can be replaced by a much slower growing function. In particular, it would be interesting to know if every graph admits a $(\text{polylog } n, \text{polylog } n)$-cover. However,
the bound in part (a) is sharp for small radii. To state the result,
let $\cover_r(n)$ be the smallest number $\cover$ such that every $n$-vertex graph admits an $(r,\cover)$-cover.
\begin{theorem}\label{thm:inefficient cover}
We have
\begin{align*}
  \cover_0(n)&=n,\\
  \cover_1(n)&=n^{1/2+o(1)},\\
  \cover_2(n)&=n^{1/2+o(1)},\\
  \cover_3(n)&=n^{1/3+o(1)}.
\end{align*}
\end{theorem}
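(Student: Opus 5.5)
The upper bounds come for free from earlier results, and the lower bounds need explicit extremal graphs; I expect the lower-bound constructions to be the hard part.

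\textbf{Upper bounds.} I use only the monotonicity $\cover_{r+1}(n)\le \cover_r(n)$, which holds because a set of radius $\le r$ also has radius $\le r+1$.
\begin{itemize}
\item Taking the cover by all singletons gives $\sum_v |N^+(v)|\le n\cdot n$, so $\cover_0(n)\le n$.
\item \Cref{thm:efficient cover}(a) with $k=2$ gives a $(1,n^{1/2+o(1)})$-cover, so $\cover_1(n)\le n^{1/2+o(1)}$.
\item Monotonicity then gives $\cover_2(n)\le \cover_1(n)\le n^{1/2+o(1)}$.
\item \Cref{thm:efficient cover}(a) with $k=3$ gives a $(3,n^{1/3+o(1)})$-cover, so $\cover_3(n)\le n^{1/3+o(1)}$.
\end{itemize}

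\textbf{Lower bound for $r=0$.} Take $G=K_n$. Every set of radius $0$ is a singleton, so a $(0,K)$-cover must use all $n$ singletons. Each has $|N^+(v)|=n$, so the sum is $n^2$ and $K\ge n$.

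\textbf{Lower bounds for $r=2$ and $r=3$.} By monotonicity, $\cover_1(n)\ge \cover_2(n)$, so these two suffice. I would certify them with a fractional (LP-duality) argument. Suppose there is a probability measure $\mu$ on $V(G)$ such that every $U$ with $G[U]$ of radius at most $r$ satisfies
\[
\mu(U)\le \frac{|N^+(U)|}{Kn}.
\]
Then for any $(r,K')$-cover $V_1,\dots,V_m$ we get $1\le \sum_i\mu(V_i)\le \sum_i|N^+(V_i)|/(Kn)$, hence $K'\ge K$. So it suffices to build, for $r=2$, a graph and measure with $K=n^{1/2-o(1)}$, and for $r=3$, one with $K=n^{1/3-o(1)}$. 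Equivalently, every set of radius $r$ must satisfy the expansion inequality $|N^+(U)|\ge K n\,\mu(U)$.

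\emph{Candidate constructions.} I would first try graphs whose small balls are locally tree-like, so that the external neighbourhood of a radius-$r$ set is counted injectively. In a $d$-regular graph of girth at least $2r+3$, every such $U$ induces a tree. Distinct vertices of $U$ then have disjoint outside neighbourhoods, which gives $|N^+(U)|\ge (d-2)|U|$ and, with uniform $\mu$, $K\approx d$.

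\emph{Where this falls short.} Moore-type bounds force $n\gtrsim d^{\,r+1}$ for this girth. For $r=3$ one would need girth at least $9$ with $d\approx n^{1/3}$, and the optimal girth-$8$ generalized-quadrangle incidence graphs already need $n\approx d^3$. So the $r=3$ case looks borderline, and the $r=2$ case (which wants $d\approx n^{1/2}$) is off by a whole power. I also checked that the projective-plane incidence graph (C$_4$-free, $d\approx n^{1/2}$) fails: a radius-$2$ set can swallow nearly a whole side while $|N^+(U)|=O(n)$. A subdivided complete graph fails as well, because cheap stars around branch vertices cover all subdivision vertices.

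\textbf{Main obstacle.} The crux is a construction, probably non-regular, with a carefully chosen $\mu$ that penalizes the cheap large sets. My first attempt would be a layered or algebraic (incidence-type) graph with two vertex classes of different degrees. I would put $\mu$ on the class whose radius-$r$ neighbourhoods must expand, and then verify the expansion inequality by a case analysis on where the centre of $U$ lies. The $n^{o(1)}$ losses would be absorbed by choosing parameters, for example prime powers, along a dense sequence of $n$ and then padding to general $n$.
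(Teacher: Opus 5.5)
Your upper bounds, the $K_n$ example for $r=0$, and the weak-duality reduction are all correct. The paper's lower bounds are in fact instances of that reduction: $\mu$ is uniform on one side $Y$ of a bipartite graph, with a case split on whether the centre of $V_i$ lies in $X$ or in $Y$. But your proposal stops exactly where the theorem has content beyond \Cref{thm:efficient cover} and monotonicity. You never exhibit graphs certifying $\cover_2(n)\ge n^{1/2-o(1)}$ and $\cover_3(n)\ge n^{1/3-o(1)}$, so the claimed values of $\cover_1,\cover_2,\cover_3$ are not established.

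The missing piece is the construction, and your tree-like heuristic points the wrong way. A certificate forces $\mu(U)\le\abs{N^+(U)}/(Kn)\le 1/K$ for every radius-$r$ ball. Also $1=\sum_v\mu(v)\le\sum_v\abs{N^+(v)}/(Kn)$ forces $K\le 1+\bar d$, where $\bar d$ is the average degree. So for $r=2$ and $K\approx n^{1/2}$ you need degrees $\gtrsim n^{1/2}$, yet radius-$2$ balls may carry only an $O(n^{-1/2})$ share of $\mu$. In a $d$-regular graph of girth at least $5$ every such ball has $d^2+1$ vertices. Averaging $\mu$ over the $n$ balls then gives $K\le n/(d^2+1)$, hence $K\le\min(d+1,n/(d^2+1))=O(n^{1/3})$. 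For $r=3$ the generalized quadrangle fails outright, just as the projective plane does for $r=2$: a radius-$3$ ball around a point contains every line and vice versa, so two balls give a $(3,2)$-cover.

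What you need instead are graphs saturated with $4$-cycles, so that balls stay small, in which the neighbourhoods of the measured vertices inside a ball are nonetheless essentially disjoint. The paper builds these as bipartite Cayley graphs.

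For $r=2$, take $\Gamma=\{x\mapsto\lambda x+u\}$ over $\Fp$, $H=\{x\mapsto\lambda x\}$ and $S=\{s_0,s_1\}=\{\operatorname{id},\,x\mapsto x+1\}$, and join $f\in X$ to $shf\in Y$. Then $n=2p(p-1)$ and $d=2(p-1)$. After translating its centre to $\operatorname{id}$, every radius-$2$ set meets $Y$ inside $SHS^{-1}$, the union of four pieces $s_bHs_c^{-1}$ of size $p-1$. As $y$ runs over one piece, the cosets $Hs_{1-b}^{-1}y\subseteq N(y)$ are pairwise disjoint. Hence $\abs{N^+(V_i)}\ge\frac{p-1}{4}\abs{V_i\cap Y}$, and so $K\ge(p-1)/8\ge n^{1/2}/16$.

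For $r=3$ the same scheme runs on $\Gamma'=\{(x\mapsto\lambda x+u_1,\,x\mapsto\lambda x+u_2)\}$, with diagonal $H'$ and translations $s_{\mathbf v}$ for $\mathbf v\in Q=\{(0,0),(0,1),(1,0)\}$; here $n=2p^2(p-1)$ and $d=3(p-1)$. Radius-$3$ sets meet $Y'$ inside the $27$ pieces $s_{\mathbf b}H's_{\mathbf c}^{-1}s_{\mathbf d}H'$. Because $Q$ is not collinear, you can pick $\mathbf a\in Q$ with $\mathbf b-\mathbf a$ independent of $\mathbf d-\mathbf c$ (any $\mathbf a\ne\mathbf b$ if $\mathbf c=\mathbf d$). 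This makes the cosets $H's_{\mathbf a}^{-1}y$ disjoint across each piece and gives $K=\Omega(n^{1/3})$. Padding to arbitrary $n$ is then routine.
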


\paragraph{Acknowledgments.} This paper was conceived while we were at SLMath in Spring 2025, and we are grateful to NSF for supporting SLMath through grant DMS-1928930.
We thank Alan Frieze and Wes Pegden for several interesting discussions. The first author is grateful to Zilin Jiang for informing him how to spell ``acknowledgments''. 

\section{The maximum hitting time}
In this section, we prove part (b) of \Cref{thm:nonlin} as well as \Cref{lem:hit}. As already mentioned, part (a) of \Cref{thm:nonlin} will follow from \Cref{thm:werr}.

\begin{proof}[Proof of \Cref{thm:nonlin}(b)]
  We construct a graph with the following stronger property: for any vertex $u\in V(G)$, there is $v \in V(G)$ such that for any bias function the expected hitting time from $u$ to $v$ is at least $n^{2 - 100\eps}/256$.

  We first give a construction in the case $n = 2^k - 2$ for $k \in \mathbb{N}$, deferring the simple adjustment for arbitrary $n$ at the end of the proof. The vertex set of $G$ is $V(G) = V_1 \sqcup V_2\sqcup \dots \sqcup V_{k-1}$ where $|V_i| = 2^i$ for each $i \in [k-1]$. The edge set $E(G)$ consists of all possible edges between $V_i$ and $V_{i+1}$ for $i \in [k-2]$.

    Let $u \in V(G)$ be arbitrary, and choose $v \in V_1$ with $v \neq u$.
    Let $R$ be the expected number of visits to $V_{k-1}$ before hitting $V_1$ in a walk started at $V_2$. Since any walk from $u$ to $v$ passes through $V_2$, it suffices to give a lower bound on $R$.
    One optimal bias strategy is the one that always biases the walk toward $V_1$. Indeed, for the quantity $R$, only the sequence of layers visited by the walk matters. Thus any bias strategy induces an $\eps$-biased walk on the path graph with vertex set~$[k-1]$. The problem therefore reduces to the one-dimensional setting, where it is clear that $R$ is minimized by always biasing toward $V_1$.

    Under this strategy, let $Y_t$ be the index $i$ for which $X_t \in V_i$ so that $Y_0,Y_1,Y_2\dots$ is a biased random walk on $[k-1]$. The process $(Y_t)_{t \geq 0}$ may be modeled by a random walk on an edge-weighted $k-1$ vertex path whose $i$th edge has weight $\ga^{i-1}$, where 
    \[\ga\eqdef \frac{(1 - \eps)(4/5)}{\eps + (1 - \eps)/5} = 4 - 20\eps/(4\eps +1)\geq 4(1-5\eps).\]
    Lemma 2.6 from \cite{AF} gives $\pi_{k-1}/\pi_1$ for the expected number of visits to $k-1$ before hitting $1$ in the random walk $(Y_t)$, where $\pi_i$ is the stationary measure of $i$. 
    Using the fact that $2^k >n$ and $\log \ga > 2 - 100\eps$ (here assuming $\eps < 1/50$, since otherwise the result is trivial), we find
    \[R = \pi_{k-1}/\pi_1 = \ga^{k-3}>\tfrac{1}{64}\alpha^k = \frac{1}{64}2^{k\log \ga} > \tfrac{1}{64}n^{2 - 100\eps}.\]

    For arbitrary $n$, choose $k$ with $2^k -2 \leq n < 2^{k+1} -2$, and enlarge $V_{k-1}$ so that the total number of vertices is $n$. In the induced one-dimensional walk on the layers, this only increases the probability of moving from $k-2$ to $k-1$, and hence can only increase $R$. Therefore the same lower bound on $R$ holds:
    \[R \geq \ga^{k-3}>\tfrac{1}{256}\alpha^{k+1} = \frac{1}{256}2^{(k+1)\log \ga} > \tfrac{1}{256}n^{2 - 100\eps}.\qedhere\]
\end{proof}

We now turn to the proof of \Cref{lem:hit}. We in fact prove something a bit stronger. For any $U \subseteq V(G)$ (with $U \neq \emptyset$), consider edge weights $\ww_U$ defined by
\beq{Uwt}\ww_U(e) = (1 - \eps)^{\dist(e, U)}\enq
(where $\dist(e,U)$ is the length of the shortest path incident to both $e$ and $U$). The random walk on $(G,\ww_U)$ is the reversible Markov chain whose transition probabilities from each vertex are proportional to the weights of the incident edges. A useful observation is that there is a bias function, denoted $\vp_U$, whose $\eps$-TBRW emulates the random walk on $(G,\ww_U)$.
\begin{proposition}\label{prop:emu}
    Suppose that $U$ is a non-empty subset of $V(G)$ and $\ww_U(\cdot)$ is defined as in \eqref{Uwt}. There is a bias function $\vp_U$ such that the $\eps$-TBRW under $\vp_U$ has the same law as the random walk on $(G,\ww_U)$.
\end{proposition}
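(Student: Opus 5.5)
The plan is to take $\vp_U$ Markovian, so that it depends only on the current vertex, and to compute it explicitly. Fix $x\in V(G)$ and write $d=\dist(x,U)$. For an edge $e=xy$ we have $\dist(e,U)=\min(\dist(x,U),\dist(y,U))$. Since $\dist(y,U)\in\{d-1,d,d+1\}$, the weights at $x$ take at most two values.

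\begin{itemize}
\item If $d=0$, every edge at $x$ has weight $1$, so the target transition law is uniform on $N(x)$. We let $\vp_U$ at $x$ be uniform on $N(x)$, and the mixture $(1-\eps)\cdot\text{uniform}+\eps\cdot\text{uniform}$ is uniform, as required.
\item If $d\ge 1$, let $A\subseteq N(x)$ be the set of neighbors at distance $d-1$ from $U$, and $B=N(x)\setminus A$. Put $a=|A|\ge 1$, $b=|B|$ and $\deg x=a+b$. Edges to $A$ have weight $(1-\eps)^{d-1}$ and edges to $B$ have weight $(1-\eps)^{d}$. After normalizing, the target probabilities are
\[
p(y)=\frac{1}{a+b(1-\eps)} \ (y\in A),\qquad p(y)=\frac{1-\eps}{a+b(1-\eps)} \ (y\in B).
\]
\end{itemize}

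In the case $d\ge1$ we need a probability distribution $q$ on $N(x)$ with $(1-\eps)/(a+b)+\eps q(y)=p(y)$ for every neighbor $y$. So we define
\[
q(y)=\eps^{-1}\Bigl(p(y)-\frac{1-\eps}{a+b}\Bigr).
\]
Summing over $N(x)$ gives $\sum_y q(y)=\eps^{-1}(1-(1-\eps))=1$. It remains to check nonnegativity, which is the only real point of the argument.
\begin{itemize}
\item For $y\in B$: since $a\ge 1$ and the factor $(1-\eps)$ only shrinks the $b$-term, $a+b(1-\eps)\le a+b$, hence $p(y)\ge (1-\eps)/(a+b)$.
\item For $y\in A$: $p(y)$ is even larger, and $1/(a+b(1-\eps))\ge 1/(a+b)\ge (1-\eps)/(a+b)$.
\end{itemize}
So $q\ge 0$, and we set $\vp_U(v_0,\dots,v_t)=q$ computed at $x=v_t$. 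The case $\eps=0$ is trivial, since then all weights equal $1$.

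Finally, by construction the $\eps$-TBRW under $\vp_U$ is a Markov chain whose one-step transition probabilities from each vertex $x$ coincide with those of the random walk on $(G,\ww_U)$. Both processes start at the same vertex, so an induction on time shows that all finite-dimensional distributions agree; hence the two processes have the same law. I expect no real obstacle. The only care needed is the observation that every vertex outside $U$ has a neighbor strictly closer to $U$, which gives $a\ge1$. This is exactly what makes the $B$-probabilities at least the unbiased value, so the required bias never has to be negative.
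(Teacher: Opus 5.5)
Your proposal is correct and is essentially the paper's argument. The paper also uses a Markovian bias: at a vertex at distance $\ell\ge 0$ from $U$ it puts mass $\frac{(1-\eps)|N_2|^2}{(|N_1|+|N_2|)(|N_1|+(1-\eps)|N_2|)}$ uniformly on the neighbors that are not closer to $U$ and the remaining mass uniformly on the closer ones, which is exactly your $q$ (your total mass $q(B)$ evaluates to this expression).

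Your explicit nonnegativity check is a welcome addition that the paper leaves implicit. However, it rests on the two weights at $x$ differing by exactly a factor $1-\eps$, not on $a\ge 1$, since $a+b(1-\eps)\le a+b$ holds for every $a\ge 0$.
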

\begin{remark} A nearly identical statement appears as \cite[Prop.\ 4.1]{OSS}, but we repeat the proof here for completeness. Throughout this paper, we will use the notions of the bias function $\vp_U$ and the random walk on $(G,\ww_U)$ interchangeably. \end{remark}
\begin{proof}
    Suppose that $S = (v_0,\dots, v_k)$ is a sequence of vertices satisfying $\dist(v_k,U) = \ell$. Define $N_1(v_k) \eqdef \{w \in N(v_k): \dist(w,U) = \ell-1\}$ and $N_2(v_k) \eqdef N(v_k) \setminus N_1(v_k)$. With 
    \[p\eqdef \frac{(1 - \eps)|N_2|^2}{(|N_1|+|N_2|)(|N_1| + (1 - \eps)|N_2|)},\]
    define $\vp_U(S)$ to be uniform over $N_2$ with probability $p$ 
    and uniform over $N_1$ with probability $1 - p$. Under this Markovian bias function, if $X_t = v_k$, then
    \[\Pr(X_{t+1} = v) =\begin{cases}
        \frac{1- \eps}{|N_1| + |N_2|} + \frac{(1-p)\eps}{|N_1|} &\text{ if } v\in N_1\\
        \frac{1- \eps}{|N_1| + |N_2|} + \frac{p\eps}{|N_2|}  &\text{ if } v\in N_2
    \end{cases}, \]
    which matches the law of the walk on $(G, \ww_U)$. 
\end{proof}
\begin{lemma}\label{lem:hitt}
    For $v \in V(G)$ and $U \subseteq V(G)$, let $H_{\ww_U}(v,U)$ be the expected hitting time of $U$ in a walk started at $v$ in the weighted graph $(G,\ww_U)$ (where $\ww_U$ is defined in \Cref{Uwt}). Then
    \[H_{\ww_U}(v,U) \leq 2\eps^{-1}e(G).\]
    Moreover, if $S = \{w : \dist(w, U) > \dist(v,U) + 2\log_{1/(1 - \eps)} n\}$ and $W = V(G) \setminus S$, then 
    \[H_{\ww_U}(v,U) \leq 2\eps^{-1}(e(G[W]) + 1).\]
\end{lemma}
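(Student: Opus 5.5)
The plan is to use electrical network theory for the reversible walk on $(G,\ww_U)$, treating $U$ as absorbing (equivalently, contracting $U$ to a single vertex). Write $d_x\eqdef\dist(x,U)$ and $w_x\eqdef\sum_{e\ni x}\ww_U(e)$. Let $G_U(v,x)$ be the expected number of visits to $x$ before hitting $U$, starting from $v$. Then
\[
H_{\ww_U}(v,U)=\sum_{x\notin U}G_U(v,x).
\]
I will use two standard identities for reversible chains killed on $U$.

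First, by the strong Markov property at the first visit to $x$, $G_U(v,x)\le G_U(x,x)$. Second, $G_U(x,x)=w_x R_{\mathrm{eff}}(x,U)$, since the escape probability from $x$ equals $1/(w_xR_{\mathrm{eff}}(x,U))$. Combined with reversibility, the same reasoning gives $G_U(v,x)=w_x\,R_{\mathrm{eff}}(v,U)\,\Pr_x(\text{hit } v \text{ before } U)\le w_xR_{\mathrm{eff}}(v,U)$. So $G_U(v,x)$ is bounded by $w_x$ times either $R_{\mathrm{eff}}(x,U)$ or $R_{\mathrm{eff}}(v,U)$, and I will use whichever is better.

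Resistances are bounded by putting the unit flow along a shortest path from $y$ to $U$. Its $j$-th edge from $U$ has $\dist(e,U)=j$, hence resistance $(1-\eps)^{-j}$. Summing the geometric series,
\[
R_{\mathrm{eff}}(y,U)\le\sum_{j=0}^{d_y-1}(1-\eps)^{-j}\le \eps^{-1}(1-\eps)^{-(d_y-1)}.
\]

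For the first bound, write
\[
H_{\ww_U}(v,U)\le\sum_{x\notin U}\sum_{e\ni x}\ww_U(e)R_{\mathrm{eff}}(x,U).
\]
Any edge $e$ incident to $x$ has $\dist(e,U)\ge d_x-1$, so $\ww_U(e)\le(1-\eps)^{d_x-1}$. Each incidence term is therefore at most $\eps^{-1}$. There are at most $2e(G)$ incidences, which gives $H_{\ww_U}(v,U)\le 2\eps^{-1}e(G)$.

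For the refined bound, split the incidence pairs $(x,e)$ according to whether $e\subseteq W$. When $e\subseteq W$, use the estimate above with $R_{\mathrm{eff}}(x,U)$. These pairs contribute at most $2\eps^{-1}e(G[W])$.

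When $e$ has an endpoint in $S$, use $G_U(v,x)\le w_xR_{\mathrm{eff}}(v,U)$ instead. This choice is legitimate: the chosen bound is allocated per vertex $x$, so the incidence split should be organized as a vertex split, $x\in W$ versus $x\in S$. The contribution of a vertex $x\in W$ is then bounded by the sum over all its incident edges, and this includes edges to $S$. Those edges to $S$ are handled by the same smallness estimate as below. For such an edge, $\dist(e,U)\ge d_v+2\log_{1/(1-\eps)}n-1$, so
\[
\ww_U(e)R_{\mathrm{eff}}(v,U)\le (1-\eps)^{d_v+2\log_{1/(1-\eps)}n-1}\,\eps^{-1}(1-\eps)^{-(d_v-1)}=\eps^{-1}n^{-2}.
\]
There are fewer than $n^2$ incidences in total, so all these terms sum to less than $\eps^{-1}$. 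Hence
\[
H_{\ww_U}(v,U)\le 2\eps^{-1}e(G[W])+\eps^{-1}\le 2\eps^{-1}(e(G[W])+1).
\]

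The main obstacle is making this split bookkeeping consistent while keeping the constants. For $x\in W$, the terms over edges to $S$ must use the small bound $\ww_U(e)\le(1-\eps)^{d_v+2\log_{1/(1-\eps)}n-1}$ together with $R_{\mathrm{eff}}(x,U)\le\eps^{-1}(1-\eps)^{-(d_x-1)}$. Since $d_x\le d_v+2\log_{1/(1-\eps)}n$ for $x\in W$, each such term is again $O(\eps^{-1}n^{-2})$ after absorbing a factor $(1-\eps)^{-1}$; I would check that the exponents line up exactly, and otherwise shift the threshold by one. The identity $G_U(x,x)=w_xR_{\mathrm{eff}}(x,U)$ is standard (e.g.\ from \cite{AF}).
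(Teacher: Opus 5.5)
The first inequality in your proposal is fine. The refined inequality, however, fails at one step as you finally wrote it.

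\textbf{The failing step.} In your vertex split you charge every edge at a vertex $x\in W$ to $R_{\mathrm{eff}}(x,U)$. You then claim that the edges $e=xy$ with $y\in S$ contribute $O(\eps^{-1}n^{-2})$ each. They do not. Since $d_x\le d_v+2\log_{1/(1-\eps)}n<d_y$, we have $\dist(e,U)=d_x$, so $\ww_U(e)=(1-\eps)^{d_x}$. Multiplying by your bound $R_{\mathrm{eff}}(x,U)\le\eps^{-1}(1-\eps)^{-(d_x-1)}$ gives $\eps^{-1}(1-\eps)$, not $\eps^{-1}n^{-2}$. The small weight of $e$ is exactly cancelled by the large resistance from $x$. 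Adjusting exponents or shifting by one cannot help, because $R_{\mathrm{eff}}(x,U)$ does not involve $v$ at all; for far edges, smallness can only come from $R_{\mathrm{eff}}(v,U)$. As written, you therefore only obtain $H_{\ww_U}(v,U)\le\eps^{-1}\bigl(2e(G[W])+e(W,S)\bigr)+\eps^{-1}$. The term $e(W,S)$ is not controlled by $e(G[W])$: a single vertex of $W$ at distance about $d_v+2\log_{1/(1-\eps)}n$ from $U$ may have linearly many neighbors in $S$.

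\textbf{The repair.} Use the incidence split you first proposed and then abandoned; it was already legitimate. You have
$G_U(v,x)\le w_x\min\{R_{\mathrm{eff}}(x,U),R_{\mathrm{eff}}(v,U)\}=\sum_{e\ni x}\ww_U(e)\min\{R_{\mathrm{eff}}(x,U),R_{\mathrm{eff}}(v,U)\}$.
So each incidence $(x,e)$ may be charged to either resistance, independently of the other incidences at $x$.
\begin{itemize}
\item Charge incidences with $e\subseteq W$ to $R_{\mathrm{eff}}(x,U)$. Each costs at most $\eps^{-1}$, and there are at most $2e(G[W])$ of them.
\item Charge incidences where $e$ has an endpoint in $S$ to $R_{\mathrm{eff}}(v,U)$. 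Here your computation $\ww_U(e)R_{\mathrm{eff}}(v,U)\le\eps^{-1}n^{-2}$ is correct, assuming $d_v\ge 1$ (the case $v\in U$ is trivial).
\end{itemize}
Together these give $H_{\ww_U}(v,U)\le 2\eps^{-1}e(G[W])+\eps^{-1}$.

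\textbf{Comparison with the paper.} With this fix, your argument is a valid route genuinely different from the paper's. The paper peels off one distance layer at a time. It bounds the hitting time of $U_{i-1}$ from $U_i$ by a return time and a stationary measure in the truncated graph $G_i$, then sums a geometric series per edge over the layers. You instead charge each vertex through the Green's function and effective resistance, so that each edge incidence costs at most $\eps^{-1}$. This avoids the layered graphs and their components, and gives a slightly better additive term.
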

\noindent An analogous statement (and proof) for the choice random walk appears as \cite[Theorem 3]{GHSS}.
\begin{proof}
    Let $U_i \eqdef \{v: \dist(v,U) = i\}$ and let $G_i$ be the graph induced by $\bigcup_{j \geq i-1} U_j$ after deleting edges with both endpoints in $U_{i-1}$. It suffices to show that $T_i$, the maximum expected hitting time of $U_{i-1}$ over $v \in U_{i}$, satisfies
    \beq{eq:obyo}T_i \leq 2\sum_{e \in G_i} (1 - \eps)^{\dist(e, U_{i-1})}.\enq
    Indeed, given \eqref{eq:obyo}, the first conclusion of the lemma follows by noting that
    \[H_{\ww_U}(v,U) \leq \sum_{i \geq 1} T_i \leq \sum_i 2\sum_{e \in G_i} (1 - \eps)^{\dist(e, U_{i-1})} \leq 2\sum_{e \in G} \sum_{k \geq 0} (1 - \eps)^k = 2\eps^{-1}e(G),\]
    and, similarly, the second conclusion of the lemma follows from
    \begin{align*}
    H_{\ww_U}(v,U) \leq \sum_{i = 1}^{\dist(v,U)} T_i &\leq \sum_{i=1}^{\dist(v,U)} 2\sum_{e \in G_i} (1 - \eps)^{\dist(e, U_{i-1})} \\
    &\leq2 \sum_{e \in G[W]} \sum_{k \geq 0} (1- \eps)^k + 2\sum_{e \not\in G[W]} \sum_{k \geq 2\log_{1/(1 - \eps)}n} (1 - \eps)^k\\
    &\leq 2\eps^{-1} e(G[W]) + 2e(G) \eps^{-1} n^{-2} \leq 2\eps^{-1}(e(G[W]) + 1).
    \end{align*}

    It remains to establish \eqref{eq:obyo}. Suppose $v\in U_i$ and let $u \in U_{i-1}$ be a neighbor of $v$. Let $H_{G_i}(v, u)$ be the expected hitting time of $u$ in a walk started at $v$ and $H^+_{G_i}(u,u)$ be the expected first return time to $u$ in a walk started at $u$. Letting $d_{G_i}(u)$ be the degree of $u$ in $G_i$, we have 
    \[H^+_{G_i}(u,u) - 1 = d_{G_i}(u)^{-1}\sum_{w \in N(u) \cap V(G_i)} H_{G_i}(w,u) \geq d_{G_i}(u)^{-1}H_{G_i}(v,u),\]
    implying
    \beq{eq:hvu} H_{G_i}(v,U_{i-1}) \leq H_{G_i}(v,u) \leq d_{G_i}(u) H^+_{G_i}(u,u).\enq
    To finish, recall \cite[Lemma 2.5]{AF}, which states that $H^+_{G_i}(u,u)$ is $\pi_{C_i}(u)^{-1}$ (the inverse of the stationary measure of $u$ in $C_i$, the connected component of $G_i$ containing $u$). Since all edges incident to $u$ have weight $(1 - \eps)^{i-1}$,
    \[ \pi_{C_i}(u)^{-1} = \frac{2\sum_{e \in C_i} \ww(e)}{d_{G_i}(u)(1 - \eps)^{i-1}} = 2d_{G_i}(u)^{-1}\sum_{e \in C_i} \frac{\ww(e)}{(1 - \eps)^{i-1}} = 2d_{G_i}(u)^{-1}\sum_{e \in C_i}(1 - \eps)^{\dist(e,U_{i-1})}.\]
    Using this inequality in \eqref{eq:hvu} (noting that $C_i \subseteq G_i$) and recalling $v$ was arbitrary establishes \eqref{eq:obyo}.
\end{proof}

\section{The cover time for graphs with polynomial growth}

We prove \Cref{prop:poly} via the following slightly stronger statement, which may be useful elsewhere.
\begin{theorem}
    For all $L$ and $\eps>0$, there is $C$ such that if $G$ is a connected graph with
    \beq{ballbound}\sum_{k=0}^\infty |B^k(v)|(1 - \eps)^{k} < L\enq
    for every $v \in V(G)$,
    then
    \[T(G,\eps) < Cn.\]
\end{theorem}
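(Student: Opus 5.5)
We follow the strategy sketched for the grid. Fix a spanning tree $\mathcal T$ of $G$ and a closed walk $v_0,v_1,\dots,v_{2n-2}=v_0$ traversing each tree edge twice (depth-first order). Then use the following non-Markovian strategy. At the start of phase $j$ we know the current position $X$. During phase $j$ we run the bias function $\vp_{\{v_j\}}$ from \Cref{prop:emu} until the walk hits $v_j$, and then phase $j+1$ begins. Every vertex lies on this tour, so after all $2n-2$ phases the graph is covered. Also, every phase starts at $v_{j-1}$, which is a neighbor of $v_j$. Since the strategy is allowed to depend on the history, we can switch to the next target, so the whole walk is a legitimate $\eps$-TBRW. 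It therefore suffices to show that $H_{\ww_U}(u,U)\le C'$ for $U=\{v\}$ and every edge $uv$, where $C'$ depends only on $L$ and $\eps$. Summing over the $2n-2$ phases (the expectations add by the strong Markov property at the hitting times) then gives $T(G,\eps)\le 2C'n$ from the start vertex $v_0$. For an arbitrary start $x$ we can put $x$ first on the tour by rooting the tree at $x$, so the bound holds for every starting vertex.

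\textbf{Bounding the hitting time of a neighbor.} Apply the proof of \Cref{lem:hitt} with $U=\{v\}$ and $u\in U_1$. We only need the bound \eqref{eq:obyo} for $i=1$:
\[
T_1\le 2\sum_{e\in G_1}(1-\eps)^{\dist(e,U_0)}\le 2\sum_{e\in E(G)}(1-\eps)^{\dist(e,v)}.
\]
The step to check here is that the argument does not need $G$ to be finite in any essential way. It only uses the return-time identity on the component $C_1$, and that component is finite since $G$ is a finite graph. So the bound is simply a weighted edge count around $v$.

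\textbf{Converting the edge count to \eqref{ballbound}.} An edge at distance $k$ from $v$ has an endpoint in $B^k(v)$. The number of edges at distance exactly $k$ is therefore at most $\sum_{w\in B^k(v)}\deg(w)$. We need a degree bound, and \eqref{ballbound} supplies one: taking $k=1$ gives $|B^1(w)|(1-\eps)<L$, so $\deg(w)<L/(1-\eps)$. Hence
\[
\sum_{e}(1-\eps)^{\dist(e,v)}\le \frac{L}{1-\eps}\sum_{k\ge0}|B^k(v)|(1-\eps)^k<\frac{L^2}{1-\eps},
\]
so $C'=2L^2/(1-\eps)$ works and $C=2C'$ gives the theorem.

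\textbf{Deducing \Cref{prop:poly}.} If $|B^k(v)|<p(k)$, then $\sum_k p(k)(1-\eps)^k$ converges to a constant $L$ depending only on $p$ and $\eps$, so the theorem applies. The argument has no real obstacle. The only points needing care are that it is legitimate to restart the strategy at each phase, and the bookkeeping of how distances of edges are measured in \eqref{Uwt}. The degree bound coming from the $k=1$ term is the small trick that lets us turn the edge sum into a vertex sum.
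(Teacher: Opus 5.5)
Your proposal is correct and matches the paper's proof essentially step for step. Both use a covering walk of length at most $2n$ and the bias $\vp_{\{x_{i+1}\}}$ for each consecutive target, then apply \eqref{eq:obyo} with $i=1$ together with the degree bound $\deg(w)<L/(1-\eps)$ taken from the $k=1$ term of \eqref{ballbound}; this bounds each hitting time by $2L^2/(1-\eps)$, giving the same constant $C=4L^2/(1-\eps)$ as the paper.
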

\begin{proof}   
    Let $x_0 \in V(G)$ be an arbitrary starting vertex. Fix a walk $P = (x_0,\ldots, x_m)$ that covers $V(G)$. We may assume that $m \leq 2n$. Set $C = 4L^2(1 - \eps)^{-1}$.
    
    It suffices to prove that for each $i$, there is a bias strategy $\vp_i$ such that
    \beq{biashit}H_{\vp_i}(x_i,x_{i+1}) < C/2,\enq
    where $H_{\vp_i}(x_i,x_{i+1})$ is the expected hitting time of $x_{i+1}$ in a walk started at $x_i$ under bias strategy $\vp_i$. The result will then follow
    from $T(G,\eps)\leq \sum_i H_{\vp_i}(x_i,x_{i+1})$.

    Set $\vp_i = \vp_U$ as defined in \Cref{prop:emu} with $U = \{x_{i+1}\}$. Equation \eqref{eq:obyo} gives 
    \[H_{\vp_i}(x_i,x_{i+1}) \leq 2 \sum_{e\in G} (1 - \eps)^{\dist(e,x_{i+1})}.\]
    Since \eqref{ballbound} implies that the maximum degree of $G$ is at most $L(1- \eps)^{-1}$, the number of edges at distance at most $k$ from $x_{i+1}$ is at most $|B^k(x_{i+1})|L(1- \eps)^{-1}$. Thus
    \[H_{\vp_i}(x_i,x_{i+1}) \leq 2 \sum_{e\in G} (1 - \eps)^{\dist(e,x_{i+1})} \leq 2\sum_{k \geq 0} |B^k(x_{i+1})|L(1- \eps)^{-1} (1 - \eps)^k < 2(1- \eps)^{-1}L^2,\]
    which gives \eqref{biashit} by our choice of $C$.
\end{proof}

\section{Proof of \texorpdfstring{\Cref{thm:bddcov}}{Proof of Theorem 7}}
Let
\beq{eq:kdef}
k \eqdef -4 \log_{1 - \eps} n.
\enq
Throughout this section we consider a fixed graph $G$ and an $(r,K)$-cover $V_1,\dots, V_m$ of $G^k$, the $k$th power of $G$, where $k$ is given by \eqref{eq:kdef}.
For each $i \in [m]$, let
\[
W_i \eqdef \{w \in V(G): \dist_G(w, V_i) \leq k/2\},
\qquad
\ww_i \eqdef \ww_{_{W_i}},
\qquad
G_i \eqdef (G,\ww_i),
\]
where the weights $\ww_{_{W_i}}$ are defined in \eqref{Uwt}. By \Cref{prop:emu}, for each $i \in [m]$ there is a bias function $\vp_i$ whose $\eps$-TBRW on $G$ has the same law as the random walk on $G_i$.
We now combine the local strategies $\vp_1,\dots,\vp_m$ into a single global bias function $\vp$ by allowing the walk to switch between them during the recursive exploration described below.

\paragraph*{The global strategy.} Consider the auxiliary graph $H$ on $[m]$ where $ij$ is an edge of $H$ iff $W_i \cap W_j \neq \emptyset$. Since $G$ is connected and the sets $V_1,\dots, V_m$ cover $V(G)$, the graph $H$ is connected. Fix a spanning tree $T$ of $H$ and choose a root $\roo$ such that $V_\roo$ contains the starting vertex $X_0$. For each edge $ij \in E(T)$ choose a vertex $v_{ij} \in W_i \cap W_j$. For each $i \in [m]$, let $\hat i$ be the parent of $i$ in $T$, let $C_i$ be the set of children of $i$, and set
\[
R_i \eqdef \{v_{ij}: j \in C_i\}.
\]
Also let $a_\roo \eqdef X_0$ and, for $i \neq \roo$, let $a_i \eqdef v_{\hat i i}$.

The walk performs a random depth-first exploration of $T$. For each $i \in [m]$, we recursively define an exploration $\cE_i$ of the descendant subtree of $T$ rooted at $i$. The exploration $\cE_i$ begins when the walk is at $a_i$. While $\cE_i$ is active, the walk uses the local strategy $\vp_i$. If the walk first hits $v_{ij}$ for some child $j \in C_i$ that has not yet been explored, then control passes from $\cE_i$ to $\cE_j$. The exploration $\cE_i$ remains suspended throughout the entire recursive exploration of the subtree rooted at $j$, and resumes only when that exploration has terminated and the walk has returned to $v_{ij}$. Thus the children of $i$ are explored in the order in which the walk first visits the vertices of $R_i$. After all children of $i$ have been explored, we continue with strategy $\vp_i$ until $W_i$ has been covered \emph{by steps during which $\cE_i$ is active} and the walk returns to $a_i$; at that moment $\cE_i$ terminates. Starting from $X_0 = a_\roo$, we run $\cE_\roo$.

This recursion determines the bias function $\vp$ for every history that occurs before the root exploration terminates. If $S = (x_0,\dots,x_t)$ is such a history, let $i$ be the unique index such that $\cE_i$ is active after the history $S$, and set
\[
\vp(S) \eqdef \vp_i(S).
\]
Once $\cE_\roo$ is finished, we extend $\vp$ arbitrarily to all remaining histories, e.g., by setting $\vp(S) \eqdef \vp_\roo(S)$.

\begin{observe} 
For each $i \in [m]$, let $L_i$ be the number of steps during which $\cE_i$ is active. Then $L_i$ has the same law as the first time that a random walk on $G_i$ started at $a_i$ covers $W_i$ and returns to $a_i$. Moreover,
\[
\tau_{\vp} \leq \sum_{i \in [m]} L_i.
\]
\end{observe}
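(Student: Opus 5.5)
The plan is to prove the two claims separately. For the first claim, about the law of $L_i$, I will use a time-change (excision) argument. For the second claim, $\tau_\vp \le \sum_i L_i$, I will use a covering argument.

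\emph{Law of $L_i$.} Consider the steps during which $\cE_i$ is active, and delete the time intervals during which some child exploration $\cE_j$ runs. Each such interval starts and ends at the same vertex $v_{ij}$. This holds because $\cE_j$ starts at $a_j=v_{ij}$ and, by definition, terminates only when the walk has returned to $a_j$; by induction on the depth of the subtree, every child exploration terminates almost surely. So after deletion, the concatenated sequence of positions is a path in $G$ in which consecutive positions are joined by steps taken under $\vp_i$. By \Cref{prop:emu}, $\vp_i$ is Markovian, with transition law that of the walk on $G_i$. The strong Markov property, applied at the stopping times where control passes to a child and where it returns, then shows that the excised sequence is a random walk on $G_i$ started at $a_i$. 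The key point is that whatever happens inside a child's exploration only delays the walk and returns it to the same vertex, so it does not change the conditional law of the next $\vp_i$-step.

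Next I identify the stopping rule of $\cE_i$ in terms of this excised walk. The rule is that $W_i$ has been covered by $\cE_i$-active steps and the walk has returned to $a_i$. This is exactly the first return to $a_i$ after the excised walk has covered $W_i$. The children are handled automatically: a child $j$ is explored the first time the excised walk hits $v_{ij}\in W_i$, and this must happen before $W_i$ is covered, so all children have been explored by then. Hence $L_i$ equals the cover-and-return time of $W_i$ for the walk on $G_i$ started at $a_i$. This needs $L_i<\infty$ almost surely, which holds because $G_i$ is finite and connected.

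\emph{The bound $\tau_\vp\le\sum_i L_i$.} Every step up to the termination of $\cE_\roo$ belongs to exactly one active exploration, so the termination time of $\cE_\roo$ equals $\sum_i L_i$. By that time, each $W_i\supseteq V_i$ has been visited, and the $V_i$ cover $V(G)$. Hence $\tau_\vp\le\sum_i L_i$.

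I expect the main obstacle to be the rigorous strong Markov and excision step. I would formalise it by induction on the height of the subtree rooted at $i$. The inductive statement is that, conditioned on any history up to the start of $\cE_j$, the exploration $\cE_j$ terminates almost surely at $a_j$. Conditioning on the $\sigma$-field at the return time then lets the $\vp_i$-steps resume with the correct transition law.
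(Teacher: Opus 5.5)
Your proposal is correct and follows the paper's argument. You excise the child excursions, each of which begins and ends at $v_{ij}$, to obtain a walk on $G_i$ from $a_i$; you use $R_i\subseteq W_i$ to see that covering $W_i$ forces every child to have been explored; and you use that the active intervals partition the time before $\cE_\roo$ terminates. Your explicit appeal to the Markovianity of $\vp_i$ and your induction on subtree height for almost-sure termination make rigorous points the paper leaves implicit.
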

\begin{proof}
Fix $i \in [m]$, and let $Y^i$ be the subsequence of the global walk consisting of those times at which $\cE_i$ is active. Then $|Y^i| = L_i$. Whenever the exploration of a child $j \in C_i$ is launched, it begins when the walk first hits $v_{ij}$ and ends only when the exploration of the subtree rooted at $j$ is complete; at that moment the walk is again at $v_{ij}$. Thus deleting these child excursions leaves a walk with the same law as the random walk on $G_i$ started at $a_i$, so $Y^i$ has that law.

Since $R_i \subseteq W_i$, once $Y^i$ has covered $W_i$, every child of $i$ has been explored. Hence $\cE_i$ terminates exactly when $Y^i$ has covered $W_i$ and returned to $a_i$. This proves the first claim.

For the second claim, the intervals during which the explorations $\cE_i$ are active partition the time before $\cE_\roo$ terminates. Therefore the number of steps before that time is $\sum_i L_i$. By then each $\cE_i$ has terminated, so every set $W_i$ has been covered. Since $V_i \subseteq W_i$ for every $i$ and the sets $V_i$ cover $V(G)$, the whole graph has been visited. Thus $\tau_{\vp} \leq \sum_i L_i$.
\end{proof}
\begin{lemma}
    For any $i \in [m]$ and $x \in W_i$, if $\tau_i(x)$ is the first time that the random walk on $G_i$ started at $x$ covers $W_i$ and returns to $x$, then
    \beq{eq:taui}\E[\tau_i(x)] < 32(r + 1)\gD \eps^{-1} |N_{G^k}^+(V_i)|\log^2 n.\enq
\end{lemma}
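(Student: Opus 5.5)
The plan is to combine the Matthews bound with a commute-time (effective resistance) estimate for the weighted graph $G_i=(G,\ww_i)$. Write $h_i \eqdef \max_{x,y\in W_i} H_{G_i}(x,y)$. Matthews' bound gives that the expected time for the walk on $G_i$, started anywhere, to cover $W_i$ is at most $h_i(1+\ln|W_i|)$. After covering, one more hitting time of at most $h_i$ brings the walk back to $x$. Hence
\[\E[\tau_i(x)] \le h_i(2+\ln n),\]
and it remains to show that $h_i \lesssim (r+1)\gD\eps^{-1}|N^+_{G^k}(V_i)|\log n$. For this I will use the commute time identity: for all $x,y$,
\[H(x,y)\le H(x,y)+H(y,x)= 2\ww(G_i)\,R_{\mathrm{eff}}(x,y),\]
where $\ww(G_i)$ is the total edge weight.

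\emph{Total weight.} Split the edges of $G$ according to $\dist_G(e,W_i)$.
\begin{itemize}
\item Edges with $\dist(e,W_i) \le k/2$ have both endpoints within distance $k$ of $V_i$ (up to an off-by-one that I will absorb). Their endpoints therefore lie in $N^+_{G^k}(V_i)$, so there are at most $\gD|N^+_{G^k}(V_i)|$ such edges, each of weight at most $1$.
\item Edges with $\dist(e,W_i) > k/2 = 2\log_{1/(1-\eps)} n$ have weight less than $n^{-2}$. There are at most $\gD n$ of them, so they contribute at most $\gD/n$ in total, which is negligible.
\end{itemize}
Hence $\ww(G_i)\le \gD|N^+_{G^k}(V_i)|+1$.

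\emph{Resistance.} Let $x,y\in W_i$. Each lies within $G$-distance $k/2$ of some vertex of $V_i$, and the path realizing this stays inside $W_i$. Since $G^k[V_i]$ has radius at most $r$, there is a $G^k$-path of length at most $2r$ inside $V_i$ joining these two vertices of $V_i$. Each $G^k$-edge expands to a $G$-path of length at most $k$, and every vertex on that path is within $k/2$ of one of the edge's endpoints in $V_i$, so the path lies in $W_i$. Edges inside $W_i$ have weight exactly $1$. This gives an $x$–$y$ path of unit-weight edges of length at most $(2r+1)k$, and so $R_{\mathrm{eff}}(x,y)\le (2r+1)k$.

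\emph{Assembly.} Using $-\ln(1-\eps)\ge\eps$ gives $k=4\ln n/(-\ln(1-\eps))\le 4\eps^{-1}\ln n\le 4\eps^{-1}\log n$. Then
\[h_i\le 2\bigl(\gD|N^+_{G^k}(V_i)|+1\bigr)(2r+1)\cdot 4\eps^{-1}\log n .\]
Multiplying by $2+\ln n\le \log n$ (for large $n$) and using $2r+1\le 2(r+1)$ should land within the constant $32$. Small $n$ and the $+1$ term can be absorbed by slack, since $|N^+|\ge 1$.

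The main obstacle I expect is bookkeeping rather than any conceptual step. First, I need to check that the path built from $G^k$-edges and the radius-$k/2$ fringes really stays in $W_i$, so that all its edges have weight $1$. Second, I need to confirm that every edge meeting the $k/2$-neighbourhood of $W_i$ is counted by $N^+_{G^k}(V_i)$; a distance of $k/2+1$ may need care, handled by noting that $k$ can be taken slightly generously or that the boundary layer has weight at most $1$ anyway. Finally, I must make the constants close up to exactly $32$.
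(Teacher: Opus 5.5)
Your proposal is correct and follows the paper's proof step for step: Matthews' bound plus one extra hitting time to return to $x$, the commute-time identity, a unit-weight walk of length at most $(2r+1)k$ inside $W_i$ to bound $R_{\mathrm{eff}}$, and a split of the edges into those of weight below $n^{-2}$ and those meeting $N^+_{G^k}(V_i)$ to bound the total weight. The only differences are in bookkeeping. You lose a factor $2$ in the weight bound, whereas the paper gets $2\sum_e \ww_i(e) < 1+\gD|N^+_{G^k}(V_i)|$ because edges avoiding $V(G)\setminus N^+_{G^k}(V_i)$ have both endpoints in $N^+_{G^k}(V_i)$. You then recover that factor via $2+\ln n\le\log n$, which is what forces your large-$n$ caveat, while the paper uses the cruder $1+\bigl(1+\tfrac12+\dots+\tfrac1{|W_i|}\bigr)\le 2\log n$. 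Finally, your off-by-one worry is moot, since the count of at most $\gD|N^+_{G^k}(V_i)|$ edges only needs the nearer endpoint of each edge to lie within distance $k$ of $V_i$.
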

\noindent (Here $\gD$ is the maximum degree of $G$.)

Assume the lemma for now.
By the linearity of expectation,
\[
\E[\tau_{\vp}] \leq \sum_{i \in [m]} \E[L_i].
\]
Since $V_1,\dots,V_m$ is an $(r,K)$-cover of $G^k$,
\[
\sum_{i \in [m]} |N_{G^k}^+(V_i)| \leq Kn.
\]
By the observation, $L_i$ has the same law as $\tau_i(a_i)$ for each $i \in [m]$. Therefore, applying \eqref{eq:taui} with $x=a_i$ for each $i \in [m]$ gives
\[
\E[\tau_{\vp}] < 32(r + 1)\gD \eps^{-1}\log^2 n \sum_{i \in [m]} |N_{G^k}^+(V_i)|
\leq 32(r + 1)\gD \eps^{-1}Kn\log^2 n,
\]
which proves \Cref{thm:bddcov}. 
\begin{proof}[Proof of the lemma]
    Fix $x \in W_i$ and write $\tau_i$ for the first time that the walk started at $x$ covers $W_i$.
    We apply the Matthews bound in the form 
    \[\E\tau_i \leq \max_{u,v \in W_i} H(u,v)\left(1 + 1/2 + \dots + 1/|W_i|\right)\]
    where $H(u,v) = H_{G_i}(u,v)$ is the expected hitting time from $u$ to $v$ in $G_i$
    (see \cite[Equation (11.21)]{LPW}).
    Averaging over $X_{\tau_i}$ and then using the Matthews bound gives  
    \[\E\tau_i(x) \leq \E\tau_i + \max_{u \in W_i} H(u,x)\leq 2\log n\max_{u,v \in W_i} H(u,v).\] 
    
    It suffices to show    
    \beq{kap}
        \max_{u,v \in W_i} H(u,v) < 16\eps^{-1}\gD|N_{G^k}^+(V_i)|(r + 1)\log n.
        \enq
        To that end, we use \cite[Corollary 3.11]{AF}, which states that for any two vertices $u,v\in V(G_i)$
    \beq{commute}H(u,v) + H(v,u)= R_{\mathrm{eff}}(u,v) 2\sum_{e \in G_i} \ww_i(e),\enq
    where $R_{\mathrm{eff}}(u,v)$ is the effective resistance between $u$ and $v$ in $G_i$ (see e.g.\ \cite{AF} or \cite{DoyleSnell} for the definition of $R_{\mathrm{eff}}$). By Rayleigh's Monotonicity Law, $R_{\mathrm{eff}}(u,v)$ is at most the sum of resistances of edges along any walk from $u$ to $v$. So, noting that all edges in $W_i$ have weight (and thus resistance) one, 
    \beq{Reff}R_{\mathrm{eff}}(u,v) \leq k(2r+1)\enq
    will hold if there is a path connecting $u$ and $v$ of length $k(2r+1)$ lying entirely in $W_i$. To find this path, note that if (respectively) $u'$ and $v'$ are the closest vertices to $u$ and $v$ in $V_i$, then there are paths $P_u$ and $P_v$ of length at most $k/2$ in $W_i$ connecting $u$ to $u'$ and $v$ to $v'$. Additionally, in $G^k[V_i]$, there is a path of length at most $2r$ connecting $u'$ and $v'$, and since each $e \in G^k[V_i]$ corresponds to a path of length at most $k$ in $G[W_i]$, we can find a walk $P'$ of length at most $2rk$ in $G[W_i]$ connecting $u'$ and $v'$. Concatenating the walks $P_u$, $P'$ and $P_v$ results in a walk of length at most $(2r+1)k$ connecting $u$ and $v$ in $G[W_i]$.

    Finally, with $S = V(G) \setminus N_{G^{k}}^+(V_i)$, all edges incident to $S$ have weight at most $(1 - \eps)^{k/2} = n^{-2}$, so 
    \beq{wtsum}2\sum_e \ww_i(e) \leq |S|\gD n^{-2} + \gD |N_{G^{k}}^+(V_i)|< 1+ \gD |N_{G^{k}}^+(V_i)|\leq  2\gD|N_{G^k}^+(V_i)|.\enq
    Using \eqref{Reff} and \eqref{wtsum} in \eqref{commute} gives
    \[\max_{u,v \in W_i} H(u,v) < 2\gD |N_{G^k}^+(V_i)|k(2r+1) < 4\gD |N_{G^k}^+(V_i)|k(r+1) .\]
    The result then follows from the fact (recall \eqref{eq:kdef}) that
    \[k = -4\log_{1 - \eps}n \leq 4\eps^{-1}\ln 2 \log n < 4 \eps^{-1}\log n.\qedhere\]
\end{proof}

\section{Graph covers}

\paragraph{Proof of \texorpdfstring{\Cref{thm:efficient cover}}{Theorem 8} (existence of efficient covers).}
Let $G$ be any $n$-vertex graph. For a vertex $v\in V(G)$ and an integer $r$, we denote by $B(v,r)$ the closed ball of radius $r$ centered at $v$. For a number $0<p\leq 1$, let $V(p)$ be the random subset of $V(G)$ obtained by picking each vertex of $G$ independently with probability~$p$. Define
\[t_i\eqdef (n\log n)^{i/k}, \quad \text{and} \quad 
  p_i\eqdef \min(1,t_i^{-1}\cdot 2\log n)\quad\quad\text{for }i=0,1,\dotsc,k.\]
Consider the $k+1$ independent random sets $V(p_0),\dots, V(p_k)$, and for $i = 0,1,\dots, k-1$, define
\[  S_i\eqdef \{v \in V(G) : \abs{B(v,2^i)}\leq t_{i+1} \} \quad \text{ and }\quad
  \B_i\eqdef \{B(v,2^i-1) : v\in S_i\cap V(p_i)\}.\]
We claim that $\B=\B_0\cup \B_1\cup\dotsb\cup \B_{k-1}$ is an $(2^{k-1}-1,n^{1/k+o(1)})$-cover of $G$ with positive probability.

We claim that 
\[\mbox{for each $v\in V(G)$, there is $i=i(v)\in \{0,\dots, k-1\}$ such that $\abs{B(v,2^i-1)\cap S_i}\geq t_i$.}\] Suppose for the sake of contradiction that the claim fails for some $v\in V(G)$; we show by induction on $i$ that $B(v,2^i-1)$ contains at least $t_i$ vertices:
The base case
$i=0$ is obvious. If the induction hypothesis for $i$ holds, it follows that
$B(v,2^i-1)\setminus S_i$ is non-empty, say $u_i\in B(v,2^i-1)\setminus S_i$. Because $B(v,2^{i+1}-1)\supseteq B(u_i,2^i)$, this means that
$\abs{B(v,2^{i+1}-1)}\geq \abs{B(u_i,2^i)}\geq t_{i+1}$, as required. However, $t_k > n$, so it cannot be that $B(v,2^k-1) \geq t_k$, so we have reached a contradiction.

Using the claim, we find
\begin{align*}
  \pr[v\text{ is not covered by }\B_{i(v)}]&= \pr\bigl[B(v,2^i-1)\cap S_i\cap V(p_i)=\emptyset\bigr]\\
  &=(1-p_i)^{\abs{B(v,2^i-1)\cap S_i}}< 1/n^2,
\end{align*}
and hence $\pr[\B\text{ is not a cover of }V(G)]< 1/n$.

Since
\[
  \E\left[\sum_{B\in \B}\abs{N^+(B)}\right]\leq \sum_{i=0}^{k-1} p_i\sum_{v\in S_i}\abs{B(v,2^i)}\leq \sum_{i=0}^{k-1}p_i t_{i+1}\abs{S_i}\leq k n^{1+1/k}2\log^{1+ 1/k}n,
\]
Markov's inequality implies that the family $\B$ is a $(2^{k-1}-1,4k n^{1/k}\log^{1+ 1/k}n)$-cover of $G$ with positive probability. This completes the proof of
part (a) of \Cref{thm:efficient cover}.\medskip

Part (b) follows by setting $k=2\sqrt{\log n}$ and noting that $4k n^{1/k}\log^{1+ 1/k}n < 2^{2\sqrt{\log n}}$ for reasonably large $n$.

\paragraph{Proof of \texorpdfstring{\Cref{thm:inefficient cover}}{Theorem 9} (optimal covers of small radius).}
Since radius-$0$ sets are singletons, it follows that $\cover_0(n)\leq n$,
which is sharp for $K_n$.

Since $\cover_r(n)$ is monotone nonincreasing as a function of $r$, and the inequalities $\cover_1(n)\leq n^{1/2+o(1)}$,
$\cover_3(n)\leq n^{1/3+o(1)}$ follow from \Cref{thm:efficient cover},
it suffices to prove that $\cover_2(n)\geq \tfrac{1}{32}n^{1/2}$ and $\cover_3(n)\geq \tfrac{1}{400}n^{1/3}$.\medskip

\textsc{Construction showing that $\cover_2(n)\geq \tfrac{1}{32}n^{1/2}$.}
By Bertrand's postulate it suffices to show that $\cover_2(n)\geq n^{1/2}/16$ for $n$ of the form $n=2p(p-1)$, where $p$ is prime.

So, let $p$ be a prime number, and let $\Gamma$
be the set of all functions of the form $x\mapsto \lambda x+u$ with $\lambda\in \Fp^\times$ and $u\in \Fp$. The
set $\Gamma$ forms a group under composition. Let $H$ be the subgroup $\{x\mapsto \lambda x : \lambda\in \Fp^\times\}$,
and let $S\eqdef \{s_0,s_1\}$ where $s_0\eqdef \operatorname{id}$ and $s_1\eqdef (x\mapsto x+1)$.

Our example is the bipartite Cayley graph with the generating
set $SH$, i.e., its vertex set is $X\cup Y$ where $X$ and $Y$ are disjoint
copies of $\Gamma$, with vertex $f\in X$ being connected to all vertices in $Y$ that are
of the form $(sh)f$ for some $s\in S$ and $h\in H$.

To simplify the notation, given a subset $C$ of $\Gamma$ there is a copy of $C$ inside
$X$, and a copy of $C$ inside $Y$; we denote them by $C\cap X$ and $C\cap Y$, respectively.

Suppose that $V_1,\dotsc,V_m$ form a $(2,K)$-cover of this graph $G$. We claim that
\begin{equation}\label{eq:radtwo}
  \abs{N^+(V_i)}\geq \frac{p-1}{4} \abs{V_i\cap Y}\qquad\text{for all }i.
\end{equation}
From this it would follow that $\sum_i \abs{N^+(V_i)}\geq \frac{p-1}{4}\abs{\Gamma}=\frac{p-1}{8}n \geq n^{3/2}/16$.

We now prove \eqref{eq:radtwo}. Let $f_i$ be the center of $V_i$, so that $V_i\subseteq B(f_i,2)$.
Because of symmetry, we may assume that $f_i$ is equal to $\operatorname{id}$ (either in $X$ or in $Y$).

To prove \eqref{eq:radtwo}, we split into cases according to the location of $f_i$:
\begin{itemize}
  \item If $f_i\in X$, then $V_i\cap Y\subseteq N(f_i)=SH\cap Y$.
    Therefore $N^+(V_i)$ contains the set $(SH)^{-1}(V_i\cap Y)=HS^{-1}(V_i\cap Y)$. Note that
    if $y\in Y$, then the neighborhood of $y$ is $HS^{-1}y=Hs_0^{-1}y\cup Hs_1^{-1}y$.
    Because the cosets $Hs_1^{-1}y$ are disjoint for distinct $y\in s_0H$, and
    the sets $Hs_0^{-1}y$ are disjoint for distinct $y\in s_1H$, it follows that
    \begin{align*}
      \abs{N^+(V_i)}&\geq \max\bigl(\abs{Hs_0^{-1} (V_i\cap Y\cap s_1H)},\abs{Hs_1^{-1} (V_i\cap Y\cap s_0H)}\bigr)\\
                  &=\abs{H}\max\bigl(\abs{V_i\cap Y\cap s_1H},\abs{V_i\cap Y\cap s_0H}\bigr)\\
                  &\geq \abs{H}\abs{V_i\cap Y}/2\\
                  &=\frac{p-1}{2}\abs{V_i\cap Y}.
    \end{align*}
  \item If $f_i\in Y$, then $V_i\cap Y\subseteq \{f_i\}\cup N(N(f_i))=\bigl(\{\operatorname{id}\}\cup (SH)(SH)^{-1}\bigr)\cap Y=SHS^{-1}\cap Y$.
    As above, the cosets $Hs_1^{-1}y$ are disjoint for distinct $y\in s_0Hs_0^{-1}$. They are also disjoint for distinct $y\in s_0Hs_1^{-1}$.
    Similarly, the cosets $Hs_0^{-1}y$ are disjoint for distinct $y\in s_1Hs_0^{-1}$, and also for distinct $y\in s_1Hs_1^{-1}$. So,
    \begin{align*}
      \abs{N^+(V_i)}&\geq \max_{j,k\in \{0,1\}}\abs{Hs_j^{-1}(V_i\cap Y\cap s_{1-j}Hs_k^{-1})}\\
                    &=\abs{H}\max_{j,k\in \{0,1\}} \abs{V_i\cap Y\cap s_{1-j}Hs_k^{-1}}\\
                    &\geq \frac{p-1}{4}\abs{V_i\cap Y}.
    \end{align*}    
\end{itemize}

\textsc{Construction showing that $\cover_3(n)\geq \tfrac{1}{200}n^{1/3}$ for $n$ of the form $n=2p^2(p-1)$.}
This is similar to the above, except for the choice of the group and the generating set.
Define
\begin{align*}
  \Gamma'&\eqdef \{(x\mapsto \lambda x+u_1,x\mapsto \lambda x+u_2) : \lambda\in \Fp^\times,\ u_1,u_2\in \Fp\}.\\
\intertext{Note that $\Gamma'$ is a subgroup of $\Gamma\times \Gamma$. For a vector $\mathbf v=(v_1,v_2)\in \{0,1\}^2$, we write $s_{\mathbf v}\eqdef\nobreak (s_{v_1},s_{v_2})\in \Gamma'$, where $s_0$ and $s_1$ are as above. Let $Q\eqdef \{(0,0),(0,1),(1,0)\}$, and let}
  H'&\eqdef \{(x\mapsto \lambda x,x\mapsto \lambda x) : \lambda\in \Fp^\times\},\\
  S'&\eqdef \{s_{\mathbf v}:\mathbf v\in Q\}.
\end{align*}
Let $G'$ be the bipartite Cayley graph on the group $\Gamma'$ with the generating set $S'H'$, i.e.,
the parts $X',Y'$ are copies of $\Gamma'$ and $f\in X'$ is connected to all $(sh)f$ with $s\in S'$ and $h\in H'$.
Let $V_1,\dotsc,V_m$ be a $(3,K)$-cover of $G'$. We claim that
\[
  \abs{N^+(V_i)}\geq 3^{-3}(p-1) \abs{V_i\cap Y'}.
\]
The result then follows, since we get $\sum_i \abs{N^+(V_i)}\geq 3^{-3}(p-1)\sum_i \abs{V_i\cap Y'} \geq 3^{-3}(p-1)n/2 = 3^{-3}n^2/(4p^2) \geq 2^{-2}3^{-3}n^{4/3}\geq \tfrac{1}{200}n^{4/3}$.

As before, let $f_i$ be the center of $V_i$, which we may assume to be equal to $(\operatorname{id},\operatorname{id})$ (in $X'$ or in $Y'$).
\begin{itemize}
\item Suppose that $f_i\in X'$. Then
  \[
    V_i\cap Y'\subseteq N(f_i)\cup N(N(N(f_i)))=S'H'(S')^{-1}S'H' \cap Y'.
  \]
  We claim that, for any $\mathbf b,\mathbf c,\mathbf d\in Q$, there is a choice of $\mathbf a\in Q$ such that
  the sets $H' s_{\mathbf a}^{-1}y$ are disjoint for distinct $y\in s_{\mathbf b}H' s_{\mathbf c}^{-1}s_{\mathbf d}H'$.
  From the claim it would then follow that
  \begin{align*}
    \abs{N^+(V_i\cap Y')}&\geq \abs{H'}\max_{\mathbf b,\mathbf c,\mathbf d\in Q}\abs{V_i\cap Y'\cap s_{\mathbf b}H' s_{\mathbf c}^{-1}s_{\mathbf d}H'}\\&\geq 3^{-3}(p-1)\abs{V_i\cap Y'}.
  \end{align*}

  We now prove the claim. Let $\mathbf b,\mathbf c,\mathbf d\in Q$ be arbitrary. If $\mathbf c=\mathbf d$, take any $\mathbf a\neq\mathbf b$. Otherwise choose $\mathbf a$ so that $\mathbf b-\mathbf a$ and $\mathbf d-\mathbf c$ are linearly independent over $\Fp$; this is possible since no line contains all three vectors in $Q$. Write elements of $\Gamma'$ as pairs $(r,\mathbf u)$, where $r\in\Fp^\times$ is the common slope and $\mathbf u\in\Fp^2$ is the translation vector. Every $y\in s_{\mathbf b}H' s_{\mathbf c}^{-1}s_{\mathbf d}H'$ has the form $(\rho,\mathbf b+\lambda(\mathbf d-\mathbf c))$ for some $\rho,\lambda\in\Fp^\times$. Thus, if the cosets corresponding to $y=(\rho,\mathbf b+\lambda(\mathbf d-\mathbf c))$ and $y'=(\rho',\mathbf b+\lambda'(\mathbf d-\mathbf c))$ meet, then
  comparison of translation vectors gives
  \[
    \mu(\mathbf b-\mathbf a+\lambda(\mathbf d-\mathbf c))=\mu'(\mathbf b-\mathbf a+\lambda'(\mathbf d-\mathbf c))
  \]
  for some $\mu,\mu'\in\Fp^\times$,
  and comparison of slopes gives $\mu\rho=\mu'\rho'$. When $\mathbf c\neq\mathbf d$, linear independence gives $\lambda=\lambda'$ and $\mu=\mu'$, while when $\mathbf c=\mathbf d$, the condition $\mathbf a\neq\mathbf b$ gives $\mu=\mu'$. In either case, $\rho=\rho'$ as well, and hence $y=y'$. 

\item If $f_i\in Y'$, then $V_i\cap Y'\subseteq \{f_i\}\cup N(N(f_i))=S'H'(S')^{-1}$. For fixed $\mathbf b,\mathbf c\in Q$, choose any $\mathbf a\in Q$ with $\mathbf a\neq\mathbf b$. The sets $H' s_{\mathbf a}^{-1}y$ are disjoint for distinct $y\in s_{\mathbf b}H's_{\mathbf c}^{-1}$. Indeed, every such $y$ has the form $(\lambda,\mathbf b-\lambda\mathbf c)$, and if the cosets corresponding to $(\lambda,\mathbf b-\lambda\mathbf c)$ and $(\lambda',\mathbf b-\lambda'\mathbf c)$ meet, then, for some $\mu,\mu'\in\Fp^\times$,
  \[
    \mu(\mathbf b-\mathbf a-\lambda\mathbf c)=\mu'(\mathbf b-\mathbf a-\lambda'\mathbf c)
    \qquad\text{and}\qquad
    \mu\lambda=\mu'\lambda'.
  \]
  Adding $\mu\lambda \mathbf c=\mu'\lambda'\mathbf c$ to both sides of the first equation yields $\mu=\mu'$, in view of $\mathbf a\neq\mathbf b$.
  Hence $\lambda=\lambda'$, and so
  \begin{align*}
    \abs{N^+(V_i\cap Y')}&\geq \abs{H'}\max_{\mathbf b,\mathbf c\in Q}\abs{V_i\cap Y'\cap s_{\mathbf b}H's_{\mathbf c}^{-1}}\\&\geq 3^{-2}(p-1)\abs{V_i\cap Y'}.
  \end{align*}
\end{itemize}

\bibliographystyle{plain}
\bibliography{TBRW}
\end{document}